\newtheorem{theorem}{Theorem}[section]
\newtheorem{prop}[theorem]{Proposition}
\newtheorem{cor}[theorem]{Corollary}
\newtheorem{lemma}[theorem]{Lemma}
\newtheorem{remark}[theorem]{Remark}
\newtheorem{definition}[theorem]{Definition}
\newtheorem{notation}[theorem]{Notation}
\begin{document}
\title{The space of Schwarz--Klein spherical triangles}
\author{Alexandre Eremenko\thanks{Both authors are supported by NSF grant DMS-1665115.}~
and Andrei Gabrielov${}^*$}
\maketitle
\def\Z{\mathbb{Z}}
\def\R{\mathbb{R}}
\def\N{\mathbb{N}}
\def\K{\mathbf{K}}
\def\A{\mathbf{A}}
\def\B{\mathbf{B}}
\def\C{\mathbf{C}}
\def\D{\mathbf{D}}
\def\G{\mathbf{G}}
\def\S{\mathcal{S}}
\def\U{\mathcal{U}}
\def\V{\mathcal{V}}
\def\L{\mathcal{L}}
\def\X{\mathcal{X}}
\def\bC{\overline{\mathbb C}}
\begin{abstract}
We describe the space of spherical triangles
(in the sense of Schwarz and Klein). It is a smooth connected orientable
$3$ manifold, homotopy equivalent to the $1$-skeleton of the cubic partition
of the closed first octant in $\R^3$. The angles and sides are
real analytic functions on this manifold which embed
it to $\R^6$.

2010 MSC: 51F99. Keywords: spherical geometry, triangles.
\end{abstract}
\section{Introduction}

The word ``triangle'' can have two different meanings:
a curve (Ptolemy, Gauss, M\"obius, Study) or a surface (Schwarz, Klein).
In this paper we use it in the sense of Schwarz and Klein. The references
for the space of Gauss-Study triangles are \cite{Chisholm,Study}.

Spherical triangles with arbitrarily large interior angles and sides occur in
the study of spherical metrics with conic singularities which recently
attracted substantial attention, \cite{E,EGT1,EGT2,EMP,MP1,FKKRUY}.
Here we describe topology of the space of all such triangles.

A spherical triangle $\Delta$
is a closed oriented disk with
three distinct marked boundary points ({\em corners})
$a_1,a_2,a_3$ equipped with
a Riemannian metric of constant curvature $1$ with conic singularities
with interior angles $\pi\alpha_i$ at $a_i$ and such that the {\em sides}
$(a_i,a_{i+1})$ are geodesic.
Two such triangles $\Delta$ and $\Delta'$ are considered equal if
there is an orientation-preserving isometry $\Delta\to \Delta'$
mapping corners to corners and preserving the labels.

We always assume that the cyclic order $(a_1,a_2,a_3)$ is consistent
with the positive orientation of $\partial\Delta$.

A {\em developing map} $f:\Delta\to\bC$ is
associated with each triangle $\Delta$.
Here $\bC$ is the Riemann sphere with the standard Riemannian metric
of curvature $1$. The {\em side lengths} of $\Delta$ are the lengths of the
images of the sides $f([a_i,a_{i+1}])$,
counting multiplicity. So the angles and side lengths of $\Delta$
are positive numbers.

A spherical triangle is completely determined by these six positive numbers,
the angles and side lengths. This defines the topology, induced from $\R^6$, on the set
of all spherical triangles.

We measure the angles in half-turns instead of radians, thus ``an integer
angle'' means an integer multiple of $\pi$ radians.
Similarly, the side lengths are measured so that the length of a great circle is 2.
We use notation $(A,B,C)$ for the interior angles of a triangle at its corners $(\A,\B,\C)=(a_1,a_2,a_3)$,
and $(a,b,c)$ for the lengths of its sides opposite the corners with angles $(A,B,C)$.

The following characterization of angles of
spherical triangles was proved in~\cite{E}.
\vspace{.1in}

\noindent{\bf Theorem A.}
{\rm\bf(i)} \emph{If three positive numbers $(A,B,C)$ are not integers, then they
are the angles of a spherical triangle if and only if}
\begin{equation}\label{cond1}
\cos^2\pi A+\cos^2\pi B+\cos^2\pi C+2\cos\pi A\cos\pi B\cos\pi C-1<0.
\end{equation}
\emph{A triangle with such angles is unique.}
\vspace{.1in}
\noindent
{\rm\bf(ii)} \emph{If exactly one of the angles $(A,B,C)$, say $A$, is an integer
then a triangle with these angles exists if and only if either $B+C$ or $|B-C|$
is an integer $m$ of the opposite parity to $A$, and}
\begin{equation}\label{b1}
m\leq A-1.
\end{equation}
\emph{For any such angles, a one-parametric family of triangles with these
angles exists. The length of the side opposite the integer corner is an integer,
 while the length of the side opposite to one of the non-integer corners is not an integer,
 and its fractional part can be chosen as a parameter.}\footnote{In \cite{E} it was erroneously stated that in this case the triangle with given angles is unique.
The mistake was corrected in \cite{FKKRUY}.}
\vskip .1in
\noindent
{\rm\bf(iii)} \emph{If two angles are integers then all three are integers, and
in this case a triangle with the angles $(A,B,C)$ exists if and only if
$A+B+C$ is odd, and}
\begin{equation}\label{b2}
\max(A,B,C)\leq (A+B+C-1)/2.
\end{equation}
\emph{There is a two-parametric family of triangles with such angles:
any two side lengths can be chosen as parameters.
The ranges for the side lengths depend on the angles and will be specified in Section \ref{neighbor} below.}
\vskip.1in

\begin{remark}\label{rmk:A}
\emph{Inequality (\ref{cond1}) is equivalent to}
\begin{equation}\label{cond2}
\cos\pi\,\frac{A\!+\!B\!+\!C}{2}\,\cos\pi\,\frac{A\!+\!B\!-\!C}{2}\,\cos\pi\,\frac{A\!+\!C\!-\!B}{2}
\,\cos\pi\,\frac{\!B\!+\!C\!-\!A}{2}<0.
\end{equation}
\emph{Yet another illuminating form of this inequality was discovered in
\cite{MP1}:}
\begin{equation}\label{cond3}
d_1\left((A,B,C),\Z_e^3\right)>1,
\end{equation}
\emph{where $\Z_e^3$ is the sublattice of $\Z^3$ consisting of points
with even sum of coordinates, and $d_1\big((A,B,C),(M,N,K)\big)=|A-M|+|B-N|+|C-K|$.}
\end{remark}

We use Theorem A to describe the space $\X\subset\R^6$ of all spherical triangles:

\begin{theorem}\label{main} The space $\X$ of all spherical triangles
is a smooth, connected, orientable real analytic $3$-manifold,
homotopy equivalent to the 1-skeleton of a
cubic partition of the closed first octant of $\R^3$.

The angles and side lengths of triangles, which define the embedding of $\X$ to $\R^6$, are smooth (real analytic) functions on $\X$.
\end{theorem}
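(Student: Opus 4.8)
\medskip
\noindent\emph{Plan of proof.}
The plan is to stratify $\X$ by the number of integer angles using Theorem~A, build a real-analytic chart near every point of every stratum, and then deformation retract $\X$ onto a graph. Write $\Pi\colon\X\to\R^3_{>0}$, $(A,B,C,a,b,c)\mapsto(A,B,C)$, for the angle map, and let $\X_0,\X_1,\X_2$ be the sets of triangles with $0$, $1$, and (by Theorem~A(iii)) $3$ integer angles. I would first treat $\X_0$: by Theorem~A(i) and Remark~\ref{rmk:A}, $\Pi$ restricts to a bijection from $\X_0$ onto the set where \eqref{cond3} holds and no coordinate is an integer, and inside the unit cube $[i,i+1]\times[j,j+1]\times[k,k+1]$ this set is the interior $\mathring T_{ijk}$ of the tetrahedron on the four vertices of the cube with odd coordinate sum (the cube minus the four closed $d_1$-unit balls around its even-sum vertices). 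Thus $\Pi(\X_0)=\bigsqcup\mathring T_{ijk}$, a disjoint union of open tetrahedra. Spherical trigonometry together with the implicit function theorem shows the three side lengths are real-analytic functions of $(A,B,C)$ on $\Pi(\X_0)$ (the combinatorial type of the developing map is locally constant there), so $\Pi$ is a real-analytic chart on $\X_0$.

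Next I would build charts across $\X_1$ and $\X_2$. By Theorem~A(ii), over a segment $\{C=k,\ |A-B|=m \text{ or } A+B=m\}$ of admissible angles there is a one-parameter family of triangles (the fractional part of a non-integer side), so $\X_1$ is a $2$-manifold; the $|A-B|=m$ segments are precisely the face-diagonals along which two adjacent tetrahedra $\mathring T_{ijk}$ meet, whereas the $A+B=m$ segments dangle with both ends outside $\X$. The key ingredient --- and what I expect to be the main technical obstacle --- is the local model of a triangle near a corner whose angle is an integer, i.e.\ the neighbourhood analysis announced in Section~\ref{neighbor}: it should show that as the angle triple approaches such a segment from within an adjacent $\mathring T_{ijk}$ the triangle converges to a case-(ii) triangle whose free parameter depends homeomorphically on the direction of approach, so that $\mathring T_{ijk}$ ``wraps around'' the two-dimensional stratum and $\mathring T_{ijk}\cup(\text{adjacent case-(ii) stratum})$ is locally a half-space; gluing the two tetrahedra meeting along a given segment then yields an $\R^3$-chart across $\X_1$. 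Over $\X_2$ the angle triple is an odd-coordinate-sum lattice point obeying the strict triangle inequality of Theorem~A(iii), $\Pi$ collapses the two-dimensional family of side lengths to a point, so angle coordinates are useless; I would instead use two free side lengths as coordinates and the deformation into $\X_0\cup\X_1$ for the third, the delicate part being to verify that near such a point the (up to eight) tetrahedra, the case-(ii) strata and the case-(iii) disc assemble into a $3$-ball rather than a singular set. Granting these local models, $\X$ is a smooth real-analytic $3$-manifold; since its six coordinate functions in $\R^6$ are exactly the angles and side lengths, which determine a triangle and, by the charts, have injective differential, the inclusion $\X\hookrightarrow\R^6$ is a real-analytic embedding. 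Orientability I would obtain from the construction itself: orient each $\mathring T_{ijk}$ by pulling back the standard orientation of angle space and each stratum by side-length coordinates, and check directly that the gluing maps of the preceding charts preserve these orientations (equivalently, that the orientation holonomy around each loop in $\X$ is trivial); this can also be read off from the description of $\X$ as a quotient of a complex manifold of developing maps by the connected isometry group of $\bC$.

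Finally I would compute the homotopy type. Using the charts above I would deformation retract $\X$ by collapsing each $\mathring T_{ijk}$ to an interior point, each bridging case-(ii) stratum to an arc joining the two case-(iii) clusters at its ends, each case-(iii) disc compatibly (one checks the local cluster at an odd lattice point is contractible and collapses onto a tree, so no extra loops or $2$-spheres are created), and each dangling stratum away. The resulting $1$-complex $G$ has vertex set the admissible case-(iii) angle triples and one edge for each unit face-diagonal segment of the case-(ii) locus joining two of them; under the change of variables
\[
(A,B,C)\ \longmapsto\ \Bigl(\tfrac{B+C-A-1}{2},\ \tfrac{A+C-B-1}{2},\ \tfrac{A+B-C-1}{2}\Bigr),
\]
which by the triangle inequality and a parity count is a bijection onto $\Z^3_{\ge0}$ carrying those edges onto the unit edges of the grid, $G$ is precisely the $1$-skeleton of the cubic partition of the closed first octant. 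In particular $\X\simeq G$ is connected. The combinatorial bookkeeping in this last step, especially the collapse near the $\X_2$ strata where many top-dimensional cells meet, is the remaining nontrivial point; everything else is routine.
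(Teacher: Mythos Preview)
Your overall architecture---stratify by the number of integer angles, build charts using angles on $\X_0$ and mixed angle/side coordinates near $\X_1$ and $\X_2$, then retract to a graph indexed by the change of variables $(p,q,r)=\bigl(\tfrac{B+C-A-1}{2},\tfrac{A+C-B-1}{2},\tfrac{A+B-C-1}{2}\bigr)$---matches the paper's. The serious gap is your treatment of the \emph{unbalanced} part of $\X$.

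Your dichotomy ``$|A-B|=m$ segments bridge two adjacent tetrahedra while $A+B=m$ segments dangle with both ends outside $\X$'' is not correct. Every edge in $X_1$, of either type, is a common edge of two tetrahedra $T_{m,n,k}$, so nothing dangles in that sense; and second-type edges can also be unbalanced (Proposition~\ref{fouredges}). More importantly, an unbalanced tetrahedron has \emph{no} vertex in $X_0$ at all, so it cannot be collapsed onto case-(iii) points as your retraction proposes. What actually happens (Propositions~\ref{unbalanced} and~\ref{sausage}) is that each semi-balanced edge $L_0$ seeds an \emph{infinite} alternating chain $\nabla_0,L_1,\nabla_1,L_2,\dots$ of unbalanced tetrahedra and edges, all of whose triangles for $j>0$ are long-sided (one side has length $\ge 2$). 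The paper's key device is to parametrise this whole chain by $(D\setminus\partial D)\times(1,\infty)$ and then use a homeomorphism $[1,\infty)\to[1,2)$ to compress the infinite tail into the single semi-balanced tetrahedron $\nabla_0$; this is what produces the homeomorphism $\X\to\U$ onto the short-sided part and makes the subsequent retraction to balanced edges and vertices possible. Without this step, your ``retract each dangling stratum away'' has no target: you would be trying to retract an infinite string of $3$-cells, each glued to the next along a $2$-cell, and there is no obvious finite subcomplex to land on.

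A smaller point: orienting each $\mathring T_{ijk}$ by the angle-space orientation does not give a global orientation of $\X$. The paper checks explicitly (proof of Theorem~\ref{shortsided}) that the transition between $\U_{1,1,1}$ and $\U_{1,2,2}$ across $T_{1,1,1}\cup\L_{1,\frac32,\frac32}\cup T_{0,1,1}$ is orientation-\emph{reversing} for the induced orientations, and fixes this by flipping the orientation on those $\U_{m,n,k}$ with $m+n+k\equiv 1\pmod 4$. Your proposed check would therefore fail as stated; you would need either this sign correction or your alternative argument via the complex structure on developing maps, which you do not carry out.
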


The plan of the paper is as follows.
In Section \ref{angles} we describe the set $X\in\R^3$
of all possible
triples $(A,B,C)$
of angles of spherical triangles in $\X$. The set $X$ is the union of open tetrahedra $T_{m,n,k}$, where $(m,n,k)\in\N^3$,
some of their open edges, and some of their vertices. In Section \ref{edgesandvertices} we describe the edges and vertices of the tetrahedra
$T_{m,n,k}$ belonging to $X$. An important difference appears between {\em balanced} (satisfying the triangle inequality (\ref{balanced}))
and {\em unbalanced} triangles. This terminology was proposed in \cite{EMP}, see also \cite{EGMP}. In particular, only balanced vertices of the tetrahedra $T_{m,n,k}$ belong to $X$.
In Section \ref{neighbor} we define the neighborhoods $\U_{m,n,k}\in\X$, homeomorphic to open balls in $\R^3$, of the sets of triangles in $\X$ corresponding to the balanced vertices $(m,n,k)\in X$.
The union $\U$ of these neighborhoods is the set of all {\em short-sided} triangles (with all three sides shorter than the full circle, see Definition \ref{short}). All balanced triangles are short-sided.
The covering of $\U$ by the charts $\U_{m,n,k}$ defines the structure of an oriented three-dimensional manifold on $\U$.
In Section \ref{semibalanced} we define the charts $\S_{m,n,k}\subset\X$, where either $m-1=n+k$ or $n-1=m+k$ or $k-1=m+n$, corresponding to {\em semi-balanced} edges in $X$ (see Definition \ref{balancededges}). Together with the charts $\U_{m,n,k}$, this defines a covering of the whole set $\X$,
defining the structure of an oriented three-dimensional manifold on $\X$. In Section \ref{homotopy} we show that the set $\X$ is homotopy equivalent to
the 1-skeleton of a cubic partition of the closed first octant of $\R^3$. The same is true for the set $\U\subset\X$ of short-sided triangles, and for the set $\mathcal B\subset\U$ of balanced triangles.
\vspace{.1in}

We thank Dmitri Panov for discussions of spherical triangles;
he explained
to us their relevance for description of spherical metrics on tori,
and stressed the
importance of the balanced condition (\ref{balanced}).

\section{Angles of spherical triangles}\label{angles}
According to Theorem A, the set $X$ of possible triples $(A,B,C)$ of the angles of spherical triangles consists of three parts: the open set $X_3$ described in (i),
the one-dimensional set $X_1$ described in (ii) and the set of isolated
points $X_0$ described in (iii). We have $X_3=Y_3\cap\R_{>0}^3$ where $Y_3$ is invariant under the group $G$ (see \cite{E}) consisting of transformations
$$(A,B,C)\mapsto (\pm A+M,\pm B+N,\pm C+K),\;\mbox{where}\;(M,N,K)\in\Z^3_e.$$
The interior of a fundamental region of $G$ can be described by the inequalities
\begin{equation}\label{in1}
A>0,\; B>0,\; C>0,\; A+B<1,\; A+C<1,\; B+C<1.
\end{equation}
The intersection of $X_3$ with the region (\ref{in1}) is described by the additional inequality
\begin{equation}\label{in2}
A+B+C>1.
\end{equation}
Conditions (\ref{in1}) and (\ref{in2}) define an open tetrahedron $\nabla$
with vertices $(1,0,0)$, $(0,1,0)$, $(0,0,1)$, $(\frac12,\frac12,\frac12)$.
Three transformations of the group $G$,
\begin{equation}\label{g1}
\begin{split}
(A,B,C)\mapsto(A,1-B,1-C),\\(A,B,C)\mapsto(1-A,B,1-C),\\(A,B,C)\mapsto(1-A,1-B,C),
\end{split}
\end{equation}
map the tetrahedron $\nabla$ onto three disjoint
tetrahedra, each having a common facet with $\nabla$.
The union of these four tetrahedra, with their common facets, the vertex $(\frac12,\frac12,\frac12)$,
and four edges adjacent to that vertex, is an open tetrahedron $T_0$ with the vertices  $(1,0,0)$, $(0,1,0)$, $(0,0,1)$, $(1,1,1)$, which is called a truncated cube in \cite{MP1}.
The subgroup $G_0$ of $G$ preserving $T_0$ consists of the identity and transformations (\ref{g1}). It is the Klein Viergroup isomorphic to $\Z_2\times \Z_2$.
The group $G_1$ consisting of transformations $(A,B,C)\mapsto \pm(A,B,C)+(M,N,K)$, where $(M,N,K)\in\Z^3_e$, is a normal subgroup of $G$ such that $G/G_1=G_0$.
The interior of a fundamental region of $G_1$ is the open unit cube $0<A<1,\;0<B<1,\;0<C<1$.
Thus $G_1$ maps $T_0$ to one tetrahedron $T_{m,n,k}$ in each unit cube
$$Q_{m,n,k}=\{ m< A< m+1,\; n< B< n+1,\; k< C< k+1,\quad (m,n,k)\in\Z^3\}.$$
The group $G_2$ of translations $(A,B,C)\mapsto(A+M,B+N,C+K)$ with $(M,N,K)\in\Z^3_e$ is a normal subgroup of $G_1$, and $G_1/G_2$ is generated by any involution $(A,B,C)\mapsto(M-A,N-B,K-C)$, where $(M,N,K)\in\Z^3_e$, mapping $T_{0,0,0}$ to $T_{M-1,N-1,K-1}$.
Vertices of the tetrahedra $T_{m,n,k}$ belong to the set $\Z^3_o\subset\Z^3$ of integer points with odd sum of coordinates.
The set $X_3$ is the union of the tetrahedra $T_{m,n,k}$ with $(m,n,k)\in\N^3$.
Two tetrahedra in adjacent unit cubes have a common edge (which may belong or not belong to $X_1$).
Since three of the four vertices of $T_0$ have the same sum of angles $A+B+C=1$,
three vertices of each tetrahedron $T_{m,n,k}$ have the same sum, either $m+n+k+1$ or $m+n+k+2$ when $m+n+k$ is even or odd.
It follows from (\ref{cond3}) that the points $(A,B,C)$ of open facets of any tetrahedron $T_{m,n,k}$ never correspond to spherical triangles,
since the angles $A,B,C$ are non-integer and $d_1\big((A,B,C),\Z_e^3\big)=1$ at these points.

\begin{notation}\label{denote}\emph{
We denote by $\V_{m,n,k}\subset\X$ the set of triangles with the angles $(m,n,k)\in\Z^3_o$ corresponding to a vertex $(m,n,k)\in X_0$.
If $L\subset X_1$ is an open edge of a tetrahedron $T_{m,n,k}$, we denote by $\L_{u,v,w}\subset\X$ the set of triangles corresponding to $L$, where $(u,v,w)$ is the midpoint of $L$.
For example, $\L_{1,\frac12,\frac12}$ is the set of triangles with the angles $A=1,\;0<B<1,\;0<C<1$ corresponding to the edge $L=\big((1,0,0),(1,1,1)\big)$ of $T_0$.}
\end{notation}

\section{Edges and vertices of the tetrahedra $T_{m,n,k}$}\label{edgesandvertices}

To see which edges and vertices of the tetrahedra $T_{m,n,k}$ correspond to spherical triangles, we use inequalities (\ref{b1}) and (\ref{b2}).
If all three angles $A,B,C$ are integer then $A+B+C$ is odd, and inequality (\ref{b2}) implies
the triangle inequality
\begin{equation}\label{balanced}
A\le B+C,\quad B\le A+C,\quad C\le A+B.
\end{equation}
Inequalities (\ref{balanced}) define a closed cone $\K$ in the first octant bounded by the planes $A=B+C,\; B=A+C,\; C=A+B$.
The edges of $\K$ are three rays $\{A=B,\;C=0\}$, $\{A=C,\;B=0\}$, $\{B=C,\;A=0\}$.
Klein calls spherical triangles with the angles satisfying (\ref{balanced}) ``triangles of the first kind.'' We prefer to call them {\em balanced} following Mondello and Panov.

\begin{definition}\label{balancededges}{\rm
A point $(A,B,C)$ in the first octant satisfying the inequality (\ref{balanced}) is called \emph{balanced}, otherwise it is \emph{unbalanced}.
A tetrahedron $T_{m,n,k}$ is \emph{balanced} if all its vertices are balanced, \emph{unbalanced} if all its vertices are unbalanced, and \emph{semi-balanced} otherwise. An open edge $L\in X_1$ of a tetrahedron $T_{m,n,k}$ is \emph{balanced} if both its ends are balanced vertices, \emph{unbalanced} if both its ends are unbalanced vertices, and \emph{semi-balanced} otherwise.
An unbalanced end $(m,n,k)\in\Z_o$ of a semi-balanced edge is called a \emph{marginally unbalanced} vertex.
Its angles satisfy either $m-1=n+k$ or $n-1=m+k$ or $k-1=m+n$.}
\end{definition}

\begin{remark}\label{rmk:balanced}{\rm
Theorem A (iii) says that $X_0=\K\cap\Z^3_o$, thus $X_0$ is the set of all balanced vertices.
Note that balanced vertices do not belong to coordinate planes, since (\ref{balanced}) implies $B=C$ when $A=0$, thus $(A,B,C)\notin\Z_o$.
All triangles corresponding to a balanced edge are balanced, all triangles corresponding to an unbalanced edge are unbalanced.
Triangles corresponding to the points of a semi-balanced edge $L$ between its balanced vertex and midpoint (including the midpoint) are balanced, triangles corresponding to the points of $L$ beyond its midpoint are unbalanced.
It is shown below that a semi-balanced tetrahedron in $X_3$ may be either {\em pointed}, with three edges in $X_1$ meeting at a vertex $V\in X_0$, or {\em not pointed}, with two opposite edges in $X_1$. All balanced tetrahedra are pointed, and all unbalanced tetrahedra are not pointed.
The set of balanced triangles in a semi-balanced tetrahedron is described in Proposition \ref{balanced-mnk} below.}
\end{remark}

To determine which edges of the tetrahedra $T_{m,n,k}$ belong to $X_1$,
we consider the edges of eight tetrahedra with a common vertex $(m,n,k)\in\Z_o^3\cap\R^3_{>0}$.
The edges meeting at $(m,n,k)$ are of two types: those on which the sum $A+B+C$ is constant
({\em first type}) and those on which it is not ({\em second type}).
An edge $L$ of the second type is {\em upward} with respect to the vertex $(m,n,k)$ if $A+B+C>m+n+k$ on $L$
and {\em downward} otherwise.
It follows from Theorem A (ii) that
all edges of the first type belonging to $X_1$ are unbalanced.
\vspace{.1in}

{\bf Edges adjacent to a balanced vertex.} Let $V=(m,n,k)\in X_0$ be a balanced vertex. Then edges of the first type adjacent to $V$
do not belong to $X_1$, while six edges of the second type belong to $X_1$: three upward edges from $V$ to
$(m,n+1,k+1)$, $(m+1,n,k+1)$ and $(m+1,n+1,k)$,
and three downward edges from $V$ to $(m,n-1,k-1)$, $(m-1,n,k-1)$ and $(m-1,n-1,k)$.
There are four upward (with $A+B+C>m+n+k$) and four downward (with $A+B+C<m+n+k$) tetrahedra adjacent to $V$.
This is summarized in the following statement.

\begin{prop}\label{sixedges} Six edges of the second type adjacent to a balanced vertex $V=(m,n,k)$ (three upward and three downward) belong to $X_1$.
All upward edges adjacent to $V$ are balanced.
Tetrahedra $T_{m,n,k}$ and $T_{m-1,n-1,k-1}$ adjacent to $V$ are pointed. Each of them has three edges in $X_1$ adjacent to $V$. Each of the remaining six tetrahedra adjacent to $V$ has exactly one edge $(V,V')$ in $X_1$ adjacent to $V$, common either with $T_{m,n,k}$ or with $T_{m-1,n-1,k-1}$. If the edge $(V,V')$ is balanced, the tetrahedron is pointed, having two more edges in $X_1$ adjacent to $V'$. Otherwise, the tetrahedron is not pointed and has two opposite edges in $X_1$.
\end{prop}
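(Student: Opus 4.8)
The plan is to work out the combinatorics of the eight tetrahedra meeting at $V=(m,n,k)$ and then to test each of their edges for membership in $X_1$ by means of Theorem A(ii).

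I would begin with two observations. First, a balanced vertex $(m,n,k)\in\Z^3_o$ satisfies the \emph{strict} triangle inequalities $m<n+k$, $n<m+k$, $k<m+n$ and has positive coordinates: if one of the inequalities (\ref{balanced}) held with equality, or if a coordinate vanished, the coordinate sum $m+n+k$ would be even, contradicting $(m,n,k)\in\Z^3_o$ (compare Remark \ref{rmk:balanced}). Second, an edge of a tetrahedron issuing from a vertex $W\in\Z^3_o$ has direction vector in $\{-1,0,1\}^3$; it is of the first type when that vector is a signed permutation of $(1,-1,0)$ (one coordinate of $W$ stays fixed at an integer value $\ell$, and the other two keep a constant integer sum $s$), and of the second type when it is a permutation of $\pm(1,1,0)$ (one coordinate stays fixed at $\ell$, and the other two keep a constant integer difference $\delta$); in either case $s$, respectively $\delta$, has parity opposite to $\ell$. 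By Theorem A(ii) such an edge lies in $X_1$ if and only if $\ell\ge 1$ and, respectively, $s\le \ell-1$ or $|\delta|\le \ell-1$. Combining the two observations: every second-type edge issuing from a balanced vertex lies in $X_1$ (the bound $|\delta|\le \ell-1$ is exactly one of the strict inequalities above), and every first-type edge issuing from a balanced vertex does not (the strict inequalities give $s\ge \ell+1$); the latter is a one-sided form of the statement, quoted above, that first-type edges in $X_1$ are unbalanced.

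Next I would enumerate the eight tetrahedra. The cube $Q_{m,n,k}$ contributes $T_{m,n,k}$, whose three edges at $V$ are precisely the three upward second-type edges $V\to(m+1,n+1,k),(m+1,n,k+1),(m,n+1,k+1)$; the cube $Q_{m-1,n-1,k-1}$ contributes $T_{m-1,n-1,k-1}$, whose three edges at $V$ are the three downward second-type edges. By the second observation all six lie in $X_1$, which proves the first sentence; the three upward neighbours of $V$ are again balanced, since their inequalities (\ref{balanced}) reduce to the strict ones for $V$, which proves the second sentence; and both tetrahedra are pointed. Each of the remaining six cubes contributes a tetrahedron whose three edges at $V$ consist of one second-type edge — upward, and hence shared with $T_{m,n,k}$, in three cases, and downward, shared with $T_{m-1,n-1,k-1}$, in the other three — together with two first-type edges; so exactly one of its edges at $V$, call it $(V,V')$, lies in $X_1$. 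Writing such a tetrahedron as $\{V,V',P,Q\}$, its apex (the vertex whose coordinate sum differs from the other three) is $V'$, so the edges $(V',P)$ and $(V',Q)$ are of the second type while the base edge $(P,Q)$ is of the first type; it remains only to test these three edges by the criterion of the second observation, applied now at $V'$ for $(V',P),(V',Q)$ and at $P$ or $Q$ for $(P,Q)$.

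This test is carried out case by case. In the three upward cases $V'$ is balanced, so $(V',P),(V',Q)\in X_1$, while $(P,Q)\notin X_1$ because its constant sum $s$ exceeds $\ell-1$, again by the strict inequality $m<n+k$ or a cyclic variant; hence the tetrahedron is pointed at $V'$. In the three downward cases $V'$ is the downward neighbour of $V$ along the shared edge; by the first observation applied to $V$, this $V'$ is unbalanced exactly when one of the equalities $m=n+k-1$, $n=m+k-1$, $k=m+n-1$ holds, in which case $V'$ is marginally unbalanced. If $V'$ is balanced, one checks as in the upward case that $(V',P),(V',Q)\in X_1$ and $(P,Q)\notin X_1$, so the tetrahedron is pointed at $V'$. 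If $V'$ is marginally unbalanced, then $(V',P),(V',Q)\notin X_1$ — each $|\delta|$ equals one of $m,n,k$ and exceeds the relevant bound $\ell-1$ — while $(P,Q)\in X_1$, the bound $s\le\ell-1$ now holding with equality; so the two opposite edges $(V,V')$ and $(P,Q)$ lie in $X_1$, and together with $(V,P),(V,Q)\notin X_1$ this shows the tetrahedron is not pointed. All these verifications are of the same elementary kind; the only delicate point is the behaviour near the coordinate planes and near marginally unbalanced vertices, where a careless use of the bound $\ell-1$ can fail — this is exactly where the strictness of (\ref{balanced}) at integer points (the first observation) is used, and it is the step I expect to require the most care.
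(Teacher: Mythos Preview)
Your argument is correct and follows the same route as the paper: test each edge adjacent to $V$ for membership in $X_1$ via the criterion of Theorem~A(ii), using the strict form of (\ref{balanced}) at points of $\Z^3_o$ to handle the borderline cases. In the paper this proposition is stated as a summary of the short paragraph preceding it, without a formal proof; your write-up supplies exactly the case-by-case verifications (in particular the analysis at the apex $V'$ of the six non-diagonal tetrahedra, and the distinction between balanced and marginally unbalanced $V'$) that the paper leaves implicit.
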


{\bf Edges adjacent to an unbalanced vertex.} Let $V=(m,n,k)\in\Z^3_o\cap\R^3_{>0}$ be an unbalanced vertex such that $m+n\leq k-1$.
Then $V\notin X_0$, and the edges meeting at $V$ belong to $X_1$ if and only if $C=k$ on those edges:
the edges of the first type from $V$ to $(m-1,n+1,k)$ and to $(m+1,n-1,k)$,
and the edges of the second type from $V$ to $(m+1,n+1,k)$ and to $(m-1,n-1,k)$.
Therefore exactly four of the twelve edges meeting at $V$ belong to $X_1$.
Each of these four edges is common to two tetrahedra adjacent to $V$, so there are four pairs of these tetrahedra.
Two of these pairs, $(T_{m-1,n,k-1},T_{m-1,n,k})$ and $(T_{m,n-1,k-1},T_{m,n-1,k})$, have common edges of the first type (see Fig.~\ref{edges-type1} for $m+n<k-1$ and Fig.~\ref{semibalanced-type1} for $m+n=k-1$). Two other pairs, $(T_{m-1,n-1,k-1},T_{m-1,n-1,k})$ and $(T_{m,n,k-1},T_{m,n,k})$, have common edges of the second type (see Fig.~\ref{edges-type2} and Fig.~\ref{semibalanced-type2}).
If $m+n<k-1$ then all eight tetrahedra are unbalanced, and each of them has two opposite edges in $X_1$.
If $m+n=k-1$ then vertices $(m+1,n+1,k)$, $(m,n+1,k-1)$ and $(m+1,n,k-1)$, shown as black dots in Fig.~\ref{semibalanced-type1} and Fig.~\ref{semibalanced-type2}, are balanced.
All other vertices of the tetrahedra adjacent to $V$ are unbalanced.
The tetrahedron $T_{m,n,k-1}$ in Fig.~\ref{semibalanced-type2} is pointed, with three edges in $X_1$ meeting at its vertex $(m+1,n+1,k)$.
Each of the other seven tetrahedra in Fig.~\ref{semibalanced-type1} and Fig.~\ref{semibalanced-type2} has two opposite edges in $X_1$.
This is summarized in the following statement.

\begin{prop}\label{fouredges} If $V=(m,n,k)\in\Z^3_o\cap\R^3_{>0}$ is an unbalanced vertex then four edges adjacent to $V$, the angle $\max(m,n,k)$ being constant on those edges, belong to $X_1$.
Two of these edges are of the second type (one upward and one downward) and the other two are of the first type.
If $V$ is marginally unbalanced then there are four semi-balanced and four unbalanced tetrahedra adjacent to $V$.
Two of the semi-balanced tetrahedra, one of them pointed and another one not pointed, have a common semi-balanced upward edge in $X_1$ adjacent to $V$.
The other two semi-balanced tetrahedra are not pointed, have no common edges, and their edges adjacent to $V$ are unbalanced, of the first type.
If $V$ is not marginally unbalanced, all eight tetrahedra adjacent to $V$ are unbalanced.
\end{prop}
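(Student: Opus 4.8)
The plan is to reduce all the assertions to Theorem~A(ii) for the edges and to the triangle inequality (\ref{balanced}) for the vertices, organised around the combinatorial description of the tetrahedra $T_{m,n,k}$ from Section~\ref{angles}.

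I would first normalise: since $V$ is unbalanced, one of the inequalities (\ref{balanced}) fails, and after permuting the coordinates we may assume $k>m+n$, so that $\max(m,n,k)=k$. Because $m+n+k$ is odd, $k-m-n$ is a positive odd integer; hence \emph{either} $k=m+n+1$ (the marginally unbalanced case) \emph{or} $k\ge m+n+3$. I expect the main pitfall to be overlooking this: the value $m+n=k-2$ never occurs for a point of $\Z^3_o$, which is exactly why ``not marginally unbalanced'' means $k\ge m+n+3$; everything else is finite bookkeeping.

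Next I would record that every $T_{m',n',k'}$ is $G$-equivalent to $T_0$, so its edge directions are the twelve vectors $\pm(1,-1,0),\pm(1,0,-1),\pm(0,1,-1)$ (first type) and $\pm(1,1,0),\pm(1,0,1),\pm(0,1,1)$ (second type), each with exactly one zero entry. A short parity count (using the description in Section~\ref{angles}) shows $V$ is a vertex of all eight tetrahedra $T_{m',n',k'}$ with $m'\in\{m-1,m\}$, $n'\in\{n-1,n\}$, $k'\in\{k-1,k\}$, and the edges of these tetrahedra at $V$ go to the twelve points $V+\delta$. Fix such an edge $L=\big(V,V+\delta\big)$: its relative interior consists of points with exactly one integer coordinate, the one indexed by the zero entry of $\delta$, and along $L$ exactly one of the sum or the difference of the two non-constant coordinates is constant (the sum if $\delta$ is of the first type, the difference if of the second type), equal to the corresponding sum or difference of coordinates of $V$ and of parity opposite to the constant coordinate since $m+n+k$ is odd. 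By Theorem~A(ii), $L\subset X_1$ iff that constant is $\le(\text{constant coordinate})-1$. For $C=k$ constant the value is $m+n$ or $|m-n|$, both $\le m+n\le k-1$, so $L\subset X_1$; for $A=m$ or $B=n$ constant the value is one of $n+k,\ k-n,\ m+k,\ k-m$, each $>m-1$ (resp.\ $>n-1$) because $k>m,n$ and all coordinates are positive, so $L\not\subset X_1$. This isolates the four edges with $\max=k$ constant: two of the first type, $\big(V,(m\pm1,n\mp1,k)\big)$, and two of the second type, the one to $(m+1,n+1,k)$ being upward and the one to $(m-1,n-1,k)$ downward, which proves the first two sentences.

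For the remaining assertions I would run through the neighbours $V+\delta$ and test (\ref{balanced}). A direct check gives: if $k\ge m+n+3$ then no $V+\delta$ (nor $V$) is balanced, so all eight adjacent tetrahedra, whose vertex sets lie in $\{V\}\cup\{V+\delta\}$, are unbalanced; if $k=m+n+1$ then exactly three neighbours are balanced, namely $(m+1,n+1,k)$, $(m+1,n,k-1)$ and $(m,n+1,k-1)$ (here $m,n\ge1$ enters). In the latter case I would list the eight tetrahedra with their vertex sets via the parity rule of Section~\ref{angles}; one finds $T_{m,n,k}$, $T_{m,n,k-1}$, $T_{m-1,n,k-1}$, $T_{m,n-1,k-1}$ contain one, three, one and one balanced vertex respectively and so are semi-balanced, while the other four are unbalanced. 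Finally, applying the $X_1$-criterion above inside each semi-balanced tetrahedron shows that the three edges of $T_{m,n,k-1}$ at its balanced vertex $(m+1,n+1,k)$ lie in $X_1$, so it is pointed; that $T_{m,n,k}$ shares with it the upward second-type semi-balanced edge $\big(V,(m+1,n+1,k)\big)$ and is not pointed (its two $X_1$-edges are opposite); and that $T_{m-1,n,k-1}$ and $T_{m,n-1,k-1}$ meet only at $V$, are not pointed, and have the first-type, unbalanced edges $\big(V,(m-1,n+1,k)\big)$ and $\big(V,(m+1,n-1,k)\big)$ as their edges at $V$ in $X_1$. The real labour is this finite enumeration of the eight tetrahedra and their $X_1$-edges, which Figures~\ref{edges-type1}--\ref{semibalanced-type2} are designed to make transparent.
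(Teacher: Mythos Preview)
Your proposal is correct and follows essentially the same route as the paper: the paper's argument is the paragraph ``Edges adjacent to an unbalanced vertex'' preceding the proposition, which normalises to $m+n\le k-1$, invokes Theorem~A(ii) to single out the four edges with $C=k$ constant, and then sorts the eight adjacent tetrahedra by inspecting which neighbouring vertices are balanced (with Figures~\ref{edges-type1}--\ref{semibalanced-type2} doing the enumeration). Your write-up makes the Theorem~A(ii) verification and the parity observation $k-m-n\in\{1\}\cup\{3,5,\dots\}$ more explicit than the paper does, but the logical skeleton is the same.
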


\begin{remark}\label{edges-unbalanced}{\em Note that every unbalanced edge in $X_1$ is an edge of at least one unbalanced tetrahedron.
Each of the unbalanced edges in Fig.~\ref{edges-type1} - Fig.~\ref{semibalanced-type2} belongs to an unbalanced tetrahedron shown in those figures,
except the edge $\big((m,n+1,k+1),(m+1,n,k+1)\big)$ in Fig.~\ref{semibalanced-type2} which is common to a semi-balanced tetrahedron
$T_{m,n,k}$ and an unbalanced tetrahedron $T_{m,n,k+1}$ with the vertices $(m,n,k+2)$ and $(m+1,n+1,k+2)$ not shown in Fig.~\ref{semibalanced-type2}.}
\end{remark}

\section{Neighborhoods of balanced vertices}\label{neighbor}
\begin{figure}
\centering
\includegraphics[width=4.5in]{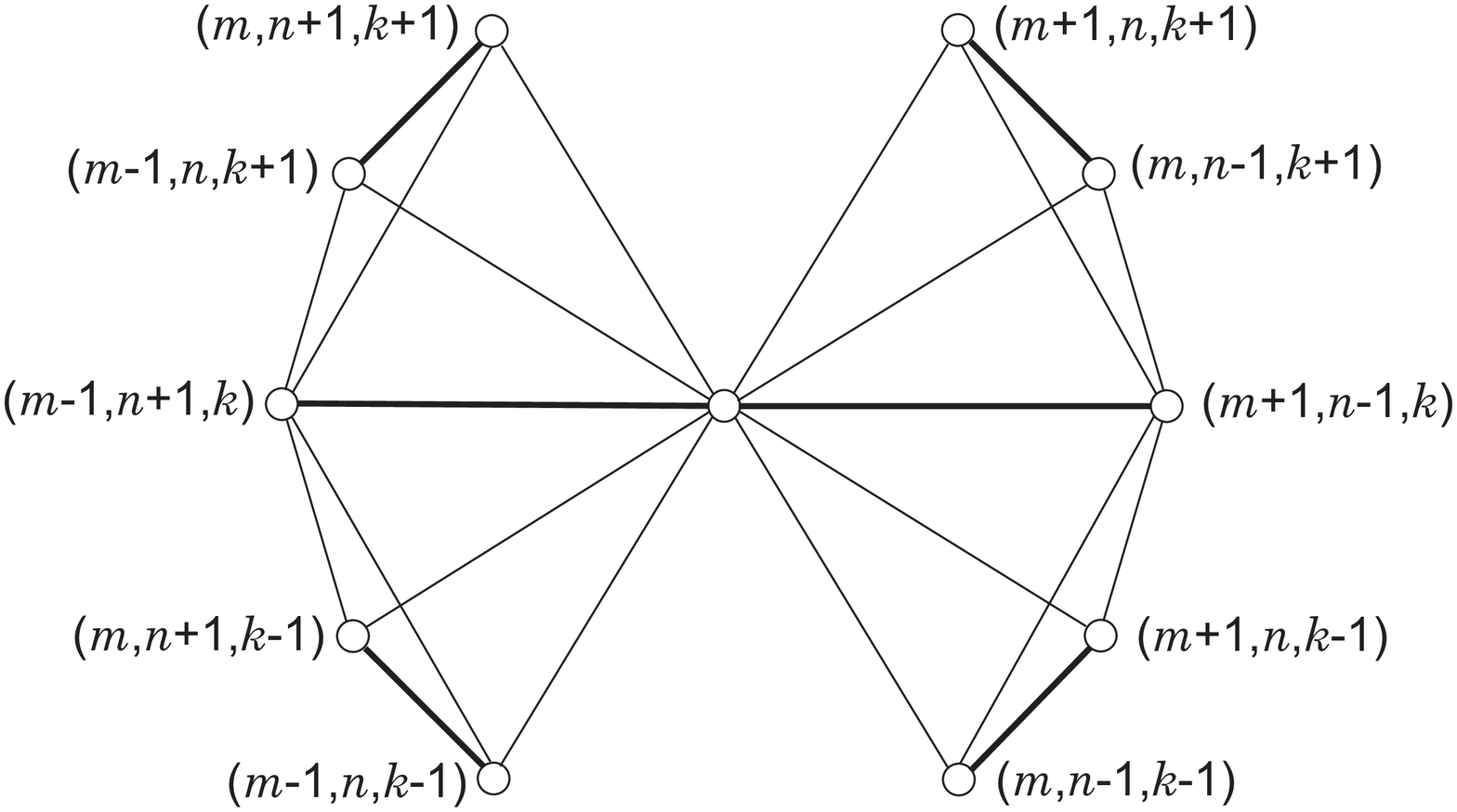}
\caption{Tetrahedra adjacent to edges in $X_1$ of the first type meeting at a vertex $(m,n,k)$ with $m+n<k-1$. Edges in $X_1$ are shown in bold line.}\label{edges-type1}

\bigskip
\includegraphics[width=4.5in]{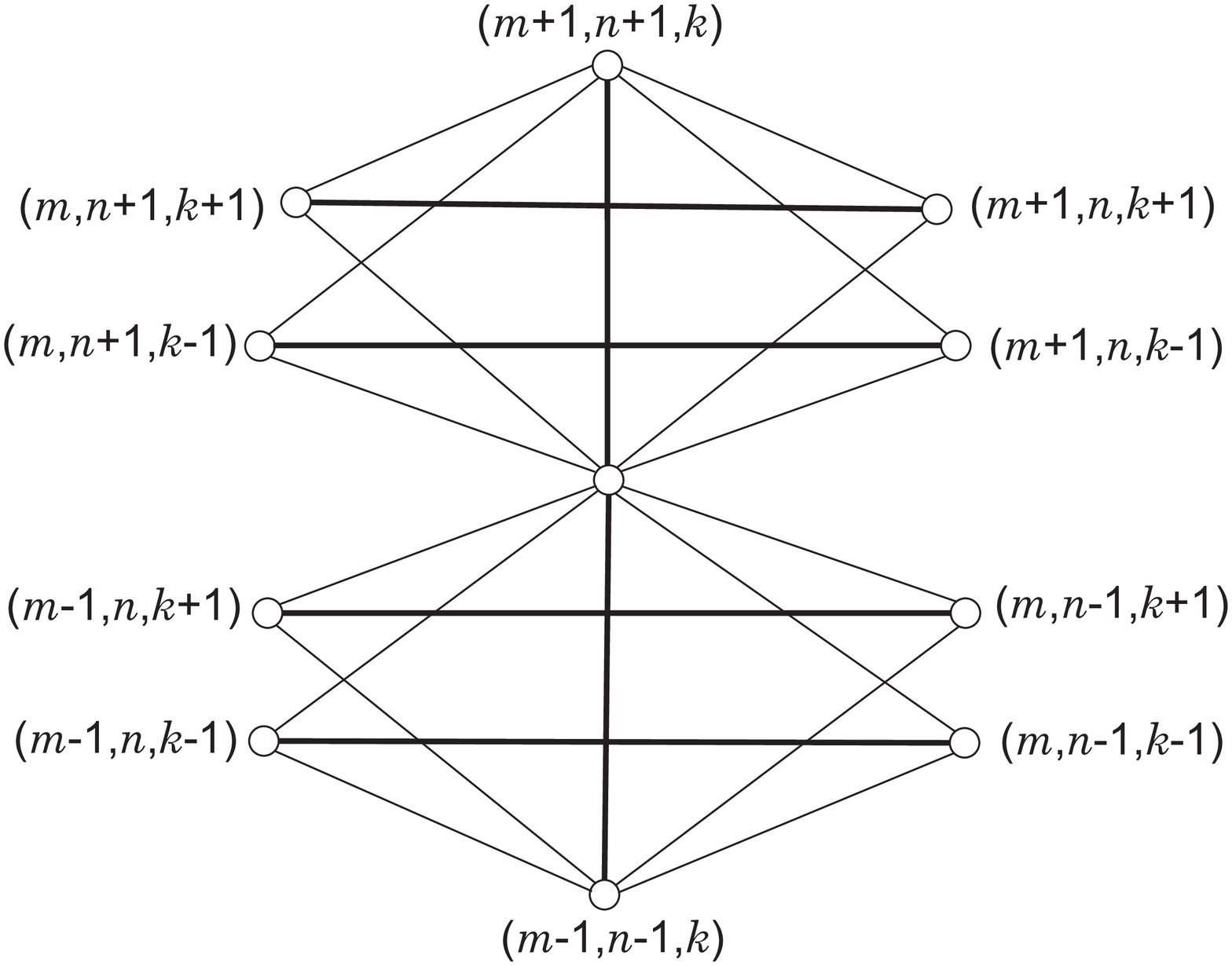}
\caption{Tetrahedra adjacent to edges in $X_1$ of the second type meeting at a vertex $(m,n,k)$ with $m+n<k-1$. Edges in $X_1$ are shown in bold line.}\label{edges-type2}
\end{figure}

\begin{figure}
\centering
\includegraphics[width=4.4in]{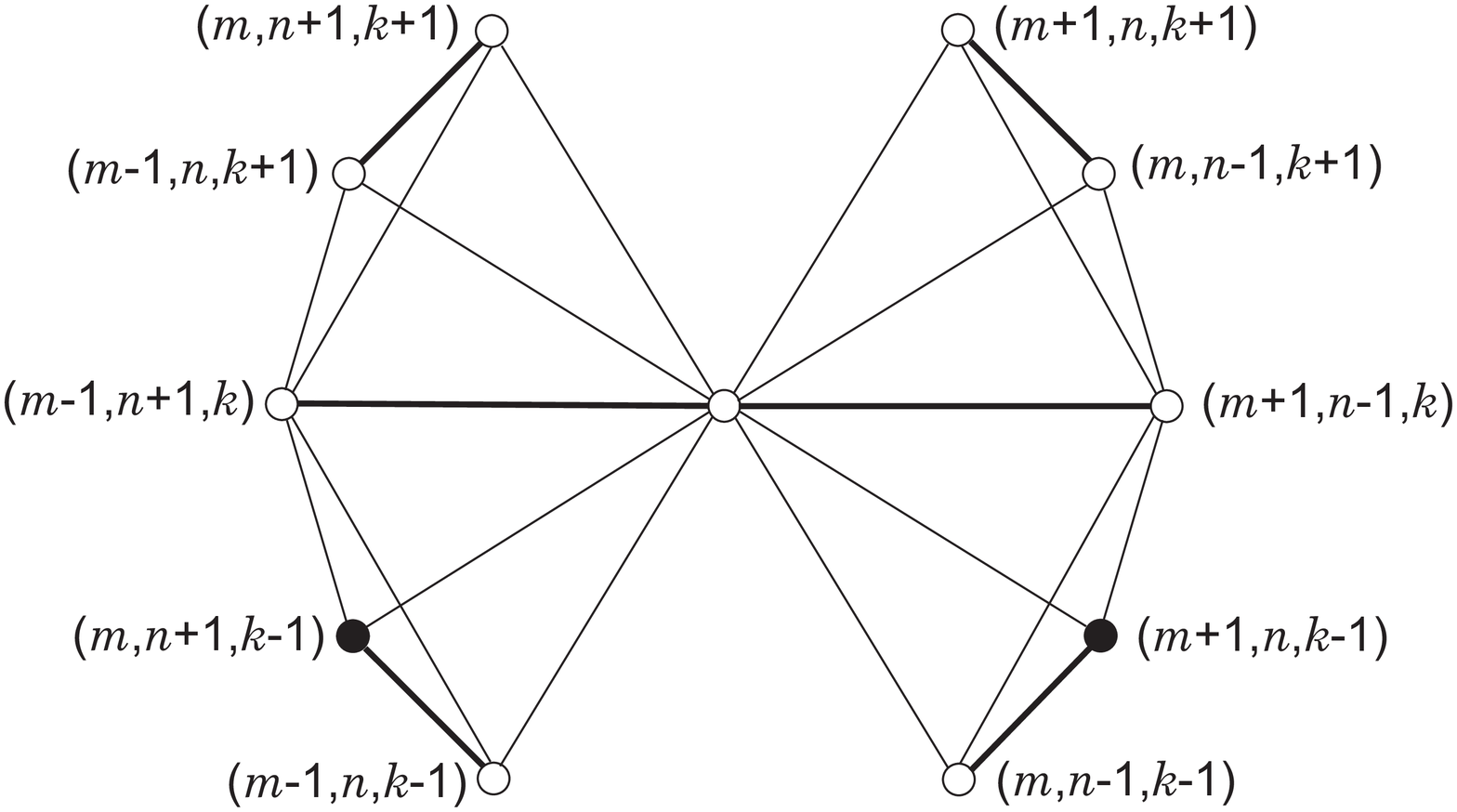}
\caption{Tetrahedra adjacent to edges in $X_1$ of the first type meeting at a marginally unbalanced vertex $(m,n,k)$ with $m+n=k-1$. Balanced vertices are shown as black dots.}\label{semibalanced-type1}

\bigskip
\includegraphics[width=4.4in]{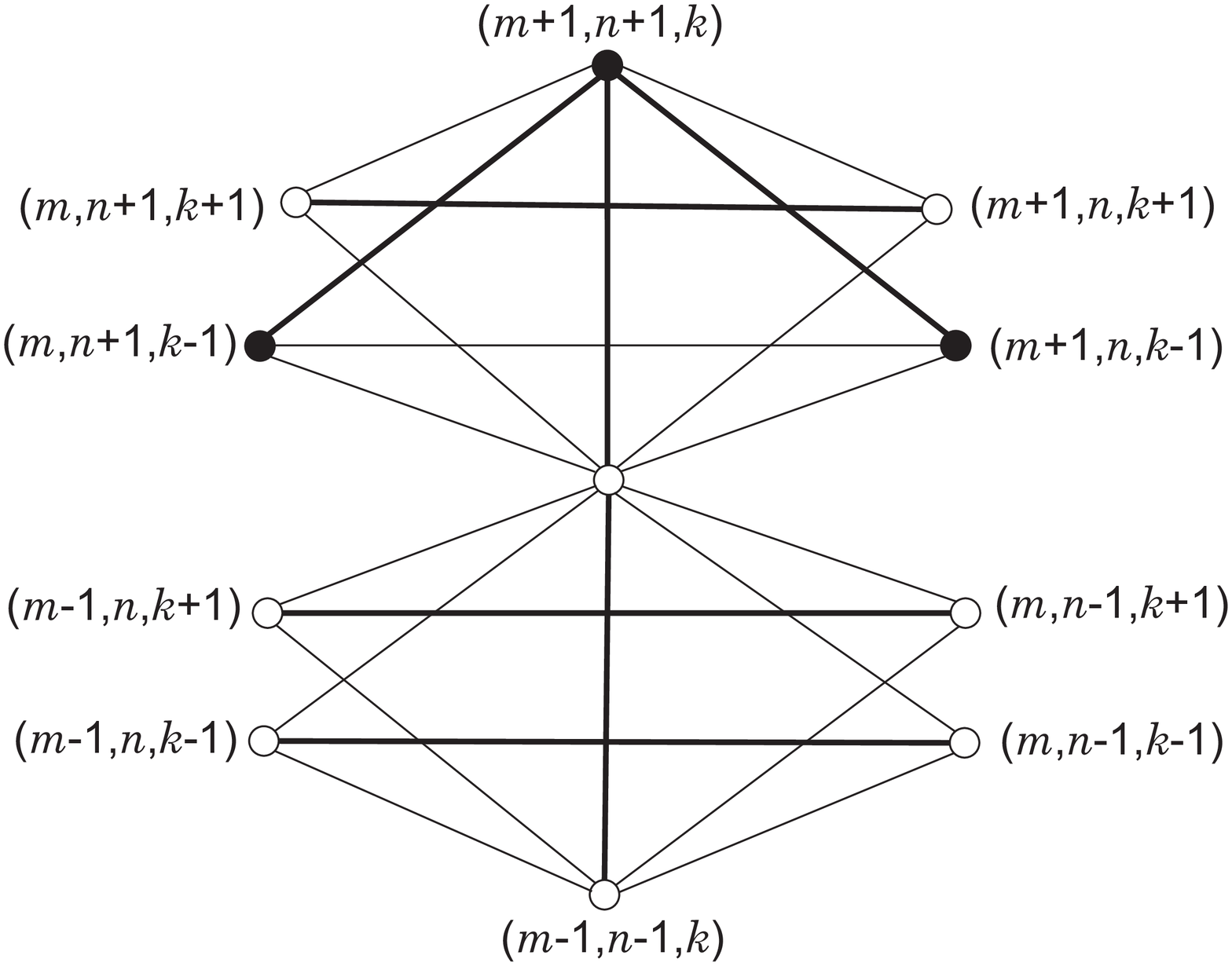}
\caption{Tetrahedra adjacent to edges in $X_1$ of the second type meeting at a marginally unbalanced vertex $(m,n,k)$ with $m+n=k-1$. Balanced vertices are shown as black dots.}\label{semibalanced-type2}
\end{figure}

\begin{figure}
\centering
\includegraphics[width=5.4in]{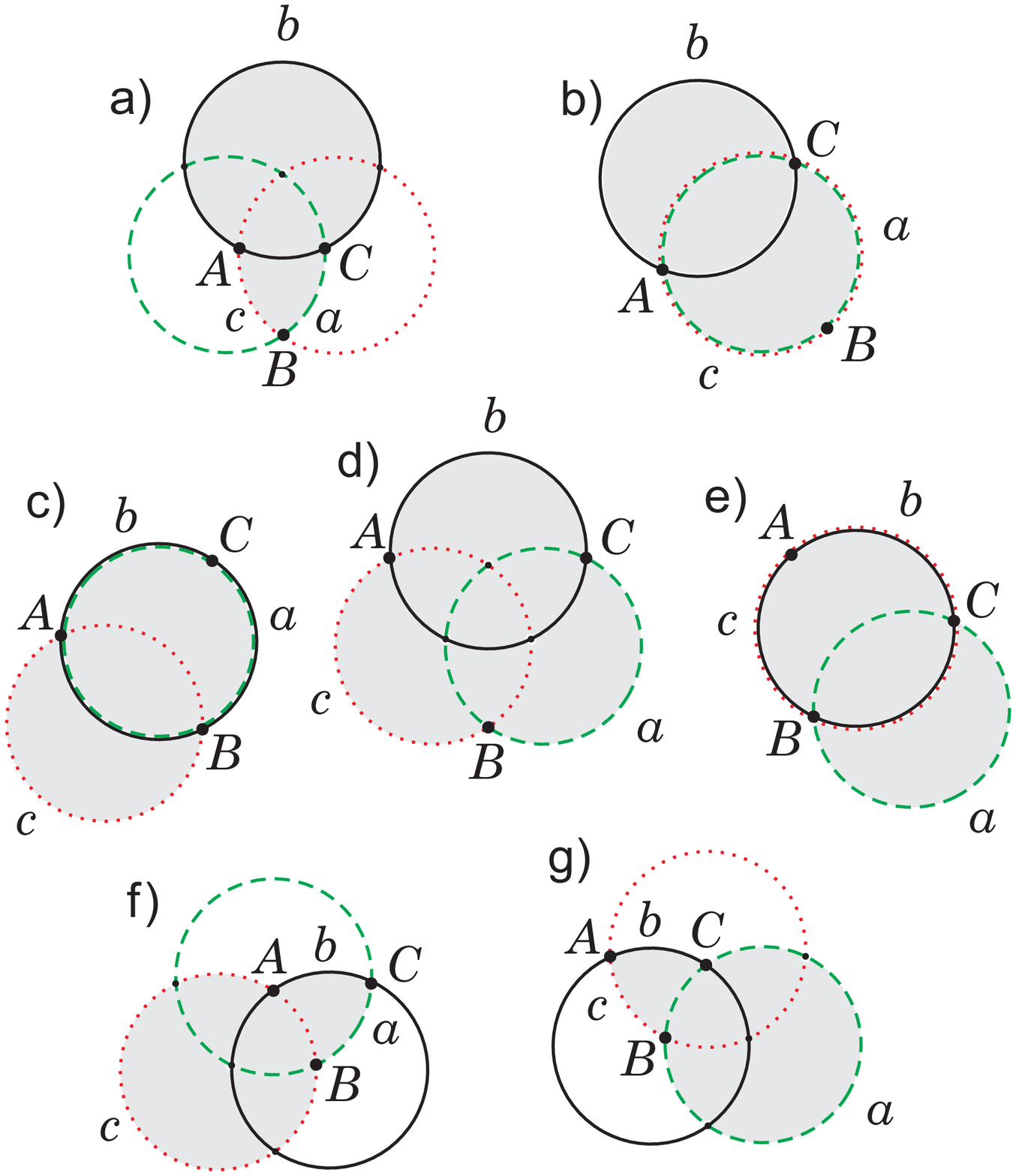}
\caption{Triangles with $A+B+C>3$ in a neighborhood $U$ of the vertex $(1,1,1)$.
When two sides of a triangle are mapped to the same circle, that circle is shown in dual color/style.}\label{up}
\end{figure}
\begin{figure}
\centering
\includegraphics[width=5.4in]{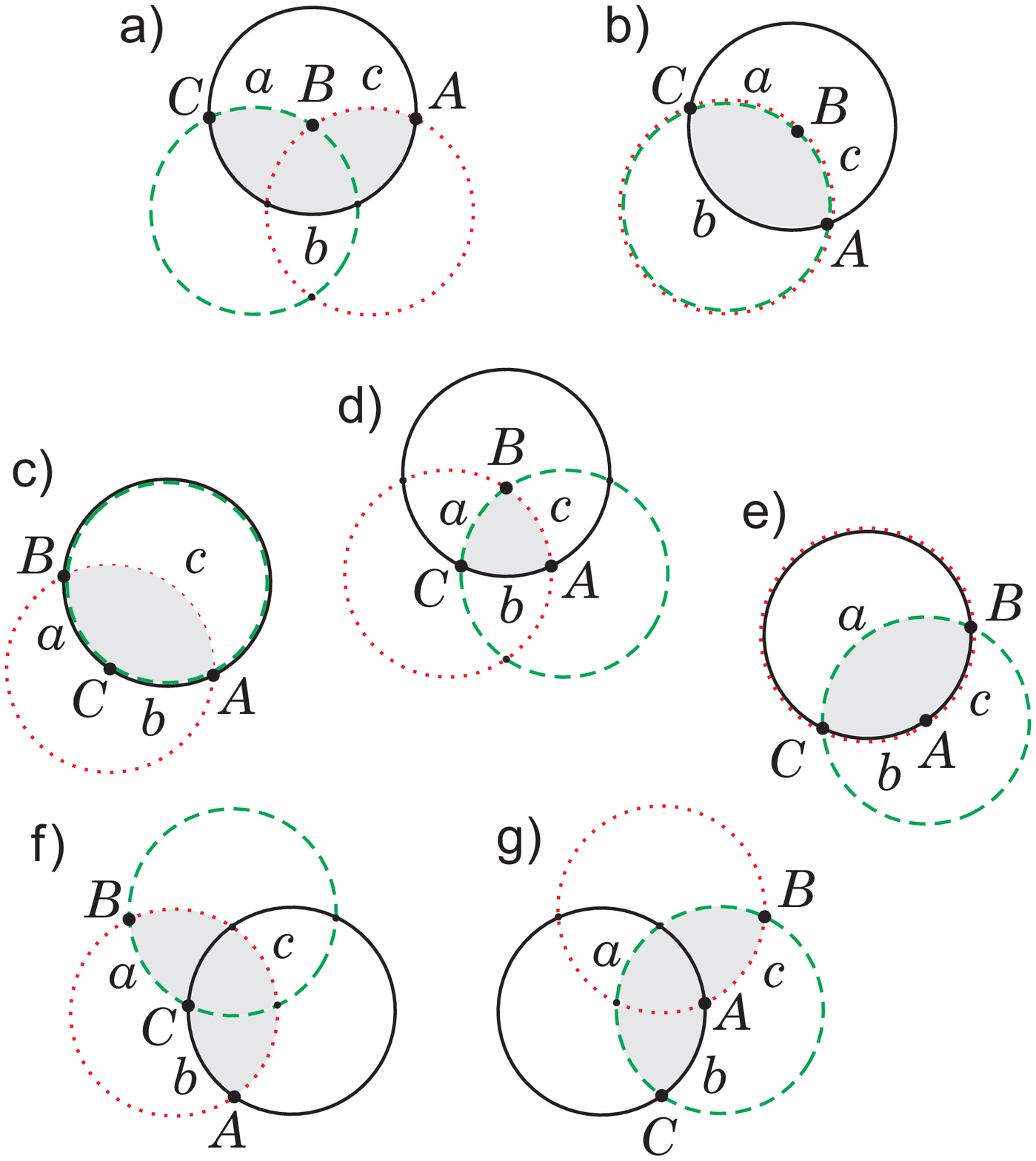}
\caption{Triangles with $A+B+C<3$ in a neighborhood $U$ of the vertex $(1,1,1)$.
When two sides of a triangle are mapped to the same circle, that circle is shown in dual color/style.}\label{down}
\end{figure}
\begin{figure}
\centering
\includegraphics[width=4.3in]{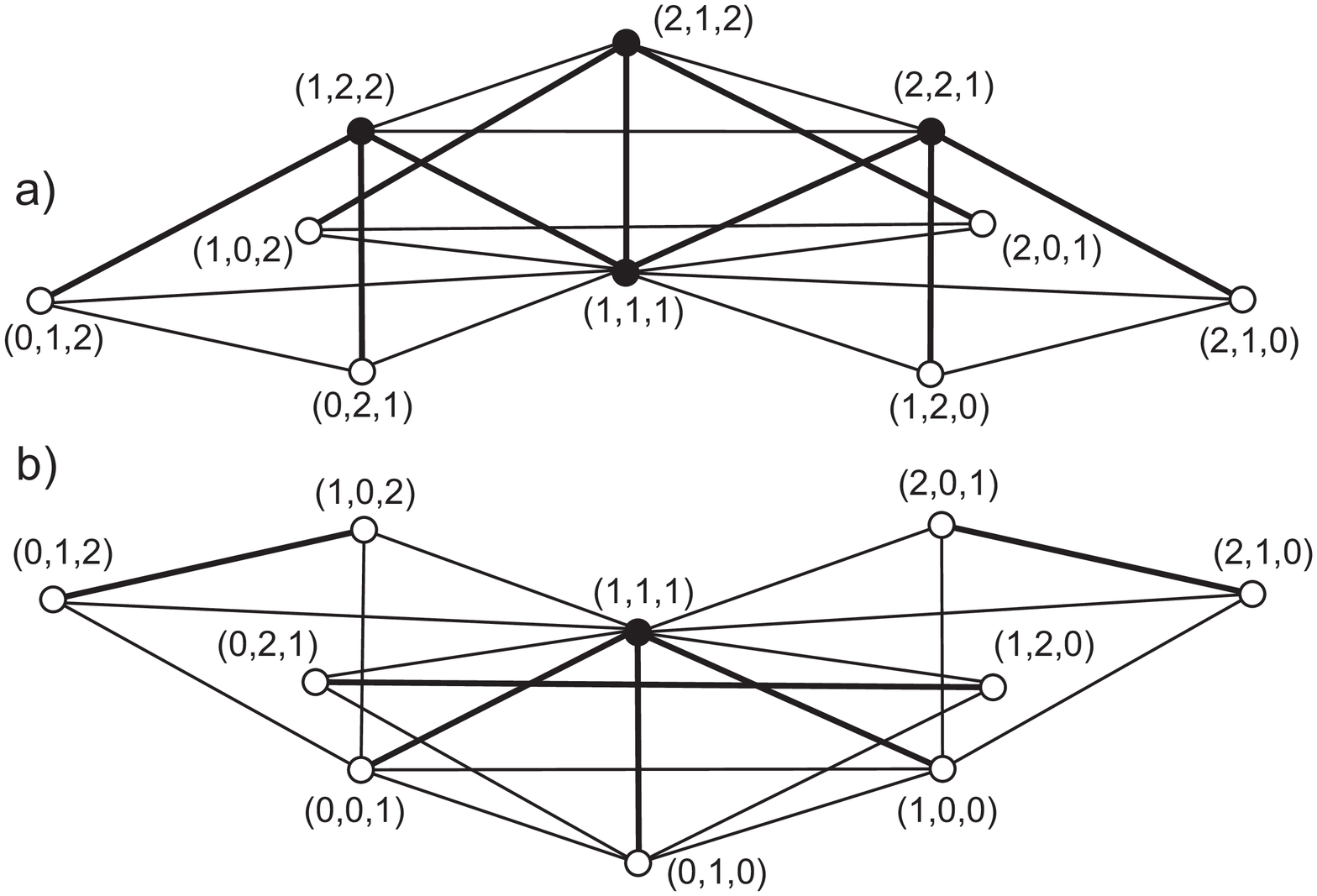}
\caption{The upward (a) and downward (b) tetrahedra adjacent to the vertex $(1,1,1)$ in Lemma \ref{111}. Edges in $X_1$ are shown in bold line.}\label{000}
\vskip.1in
\includegraphics[width=4.5in]{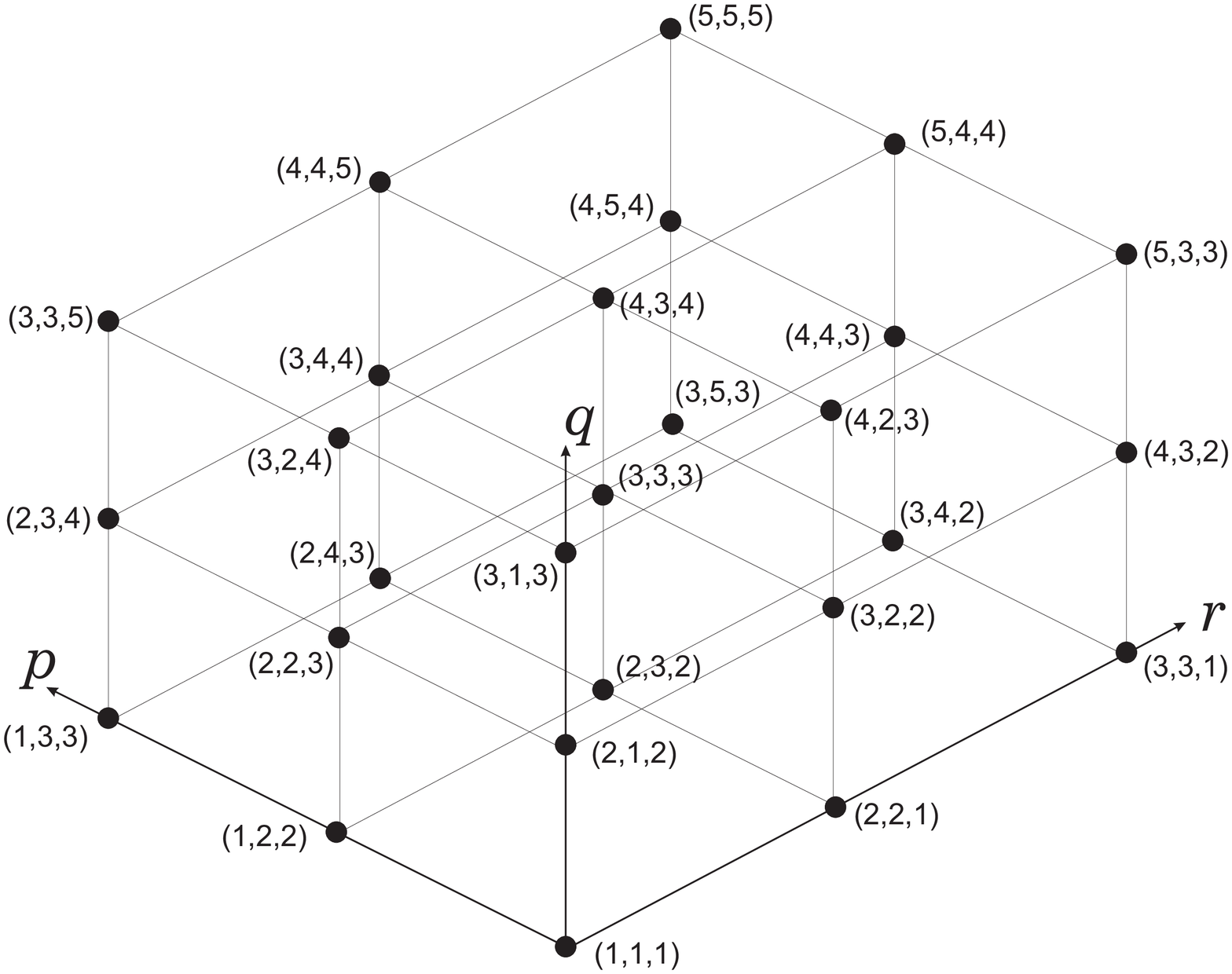}
\caption{The sets $\Lambda_0$ and $\Lambda_1$ of balanced vertices and edges in $\R^3_{p,q,r}$.}\label{cubic}
\end{figure}

In this section we define a covering of the set $\U$ of short-sided triangles (see Definition \ref{short} below) by the open neighborhoods $\U_{m,n,k}$ (see Notation \ref{umnk})
of the sets $\V_{m,n,k}$ of triangles corresponding to balanced vertices $(m,n,k)$.
Each set $\U_{m,n,k}$ is homeomorphic to an open ball in $\R^3$.

\begin{lemma}\label{111}
Let $U\subset\X$ be the union of the sets of triangles corresponding to the vertex $V=(1,1,1)$, edges in $X_1$ meeting at $V$,
and tetrahedra in $X_3$ adjacent to $V$. Then $U$ is an open neighborhood of the set $\V_{1,1,1}$ in $\X$ consisting of all triangles
with the angles $A,B,C$ satisfying the inequalities
\begin{equation}\label{abc}
0<A<2,\quad 0<B<2,\quad 0<C<2.
\end{equation}
In addition, the angles and side lengths of triangles in $U$ satisfy
\begin{equation}\label{in4}
1<A+B+C<5,\quad a<2,\quad b<2,\quad c<2,
\end{equation}
with at most one side length being $\ge 1$.
\end{lemma}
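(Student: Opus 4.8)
The plan is to analyze the vertex $V=(1,1,1)$ together with its star directly, using the descriptions of adjacent edges and tetrahedra already provided. First I would note that $(1,1,1)$ is a balanced vertex, so by Proposition~\ref{sixedges} exactly six edges of the second type meet $V$ inside $X_1$: three upward, to $(1,2,2)$, $(2,1,2)$, $(2,2,1)$, and three downward, to $(1,0,0)$, $(0,1,0)$, $(0,0,1)$. There are four upward and four downward tetrahedra adjacent to $V$, all lying in $X_3$. I would describe $U$ as the union, over these eight tetrahedra, of the triangles whose angle triples lie in the (half-open) tetrahedron obtained by adjoining to the open tetrahedron $T_{m,n,k}$ the relevant edges in $X_1$ and the vertex $V$; the key point is that these eight half-open tetrahedra, glued along the six edges and the common vertex, exactly tile the open cube $(0,2)^3$ minus the parts of the facets $A,B,C\in\{0,1,2\}$ that do \emph{not} carry triangles. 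Concretely, the angle projection $U\to\R^3$ has image precisely the set cut out by (\ref{abc}): the four downward tetrahedra fill the portion of $(0,2)^3$ with $A+B+C<3$ that corresponds to triangles, and the four upward ones fill the portion with $A+B+C>3$, with the slice $A+B+C=3$ being covered by $V$ and the six edges. This is essentially the statement that $T_0$ (the truncated cube) together with its three $G_0$-images tile the unit cube, scaled and translated appropriately to the cube $[0,2]^3$ centered combinatorially at $V$; it follows from the structure of $G_1$ described in Section~\ref{angles}.

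Next I would establish that $U$ is open in $\X$ and is a neighborhood of $\V_{1,1,1}$. Openness of the set of triangles over an open tetrahedron $T_{m,n,k}$ follows from Theorem~A(i) (the angles form a complete coordinate system there, and (\ref{cond1}) is an open condition). Along an edge $L\in X_1$ the issue is the one-parameter families of Theorem~A(ii): a triangle over a point of $L$ has a neighborhood in $\X$ consisting of triangles over $L$ and over the two adjacent tetrahedra $T_{m,n,k}$; since for $V=(1,1,1)$ every edge in $X_1$ at $V$ is adjacent to two tetrahedra of the star (namely $T_{0,0,0}$ or $T_{1,1,1}$ and one more), such a neighborhood lies in $U$. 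Finally, at $V$ itself one invokes Theorem~A(iii): a triangle over $(1,1,1)$ is determined by two side lengths ranging over an explicit two-parameter region, and its $\X$-neighborhood is built from the six edges and eight tetrahedra of the star; this is exactly where Figures~\ref{up} and~\ref{down} and the explicit local model of Lemma~\ref{111}'s figure come in. Hence $U$ is open, and it obviously contains $\V_{1,1,1}$.

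For the bounds (\ref{in4}): the inequality $1<A+B+C<5$ is immediate from (\ref{abc}) once one excludes the forbidden facets — the sum cannot reach the extreme corners of $(0,2)^3$ because those corners are integer points not in $X$, and more precisely the half-open tetrahedra never touch $A+B+C\le 1$ or $\ge 5$. The side-length bounds $a,b,c<2$ and the ``at most one side $\ge 1$'' claim are the substantive part. The strategy is to prove them on each of the three strata separately and check consistency across the gluing. On the open tetrahedra $T_{m,n,k}\subset X_3$ with $(m,n,k)$ among the eight triples in question, the triangle is unique and the side lengths are explicit real-analytic functions of $(A,B,C)$ (via the spherical law of cosines applied to the developing map); one shows $a,b,c<2$ and that the subset where, say, $a\ge 1$ is disjoint from where $b\ge 1$ or $c\ge 1$ by a direct estimate using (\ref{cond1})–(\ref{cond2}) and the constraints $0<A,B,C<2$. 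On the edges $L\in X_1$ one uses the explicit description in Theorem~A(ii): the side opposite the integer angle is an integer, and for $V$'s star that integer is forced to be $0$ or $1$ (it is at most $A-1<1$ for the edges through $(1,1,1)$ except those through the neighboring vertices, where a short case check bounds it by $1$), while the other two sides are each $<2$ with non-integer parts; again at most one can be $\ge 1$. At $V=(1,1,1)$ one uses Theorem~A(iii) with the explicit ranges of side lengths to be given in this section.

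The main obstacle I expect is the side-length analysis across the gluing, i.e. verifying that the ``at most one side $\ge 1$'' condition holds uniformly on all of $U$ and in particular is preserved as one passes from a tetrahedron in $X_3$ through an edge in $X_1$ to the vertex $V$; this requires knowing that the side lengths extend continuously (indeed real-analytically, as asserted in Theorem~\ref{main}) across these lower strata and matching the formulas of parts (i), (ii), (iii) of Theorem~A along the common faces. Making that continuity precise — which amounts to understanding how the developing map degenerates as an angle becomes integer — is the technical heart, and everything else is bookkeeping over the eight tetrahedra in the star of $(1,1,1)$.
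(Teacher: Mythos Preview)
Your plan for the first part---identifying $U$ as the preimage of $X\cap(0,2)^3$ and deducing openness---matches the paper's argument and is fine. The paper says exactly this in one sentence: the star of $V$ in $X$ is $X\cap(0,2)^3$, so $U$ is its preimage and hence open.

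For the side-length bounds, your approach diverges from the paper's and has a real gap. You propose to bound $a,b,c$ on each open tetrahedron ``via the spherical law of cosines'' and an estimate from (\ref{cond1})--(\ref{cond2}). But the law of cosines only determines $\cos\pi a$, not $a$ itself; for triangles in the Schwarz--Klein sense the side length can a priori be any positive number with that cosine, and nothing in (\ref{cond1}) tells you which branch to take. The paper sidesteps this entirely: it \emph{constructs} each triangle in the star of $(1,1,1)$ explicitly as a concrete region on the sphere (Figures~\ref{up} and~\ref{down}), and simply reads off the side lengths from the picture---e.g.\ triangles in $T_{1,1,1}$ are the complements of small triangles, hence have all sides $<1$; triangles in $T_{1,0,1}$ have two short sides $<1$ and one side $b\in(1,2)$; and so on through all eight tetrahedra and six edges. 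That explicit geometric realization is what actually pins down the side lengths, and your plan never produces it.

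Two smaller points. First, you misread Theorem~A(ii): inequality~(\ref{b1}) constrains the integer $m$ built from the \emph{angles} $B\pm C$, not the side length opposite the integer corner, so ``it is at most $A-1<1$'' is not an argument about sides. In fact on every edge through $V$ the integer side equals $1$. Second, your ``main obstacle''---continuity of side lengths across strata---is not an obstacle at all: the topology on $\X$ is by definition induced from $\R^6$ via $(A,B,C,a,b,c)$, so side lengths are continuous automatically. The work is not in the gluing but in establishing the bounds on each piece, and for that you need the geometric description the paper gives.
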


\begin{proof}
It follows from Section \ref{angles} that the union of the vertex $V$, edges in $X_1$ meeting at $V$ and tetrahedra in $X_3$ adjacent to $V$ coincides with the intersection of the set $X$ with the open cube $(0,2)^3$ defined by the inequalities (\ref{abc}).
Thus $U$ is the preimage of $X\cap (0,2)^3$ in $\X$, which is an open neighborhood in $\X$ of the preimage $\V_{1,1,1}$ of $V$.

Each triangle corresponding to $V$ is a hemisphere with three distinct marked points $(\A,\B,\C)$ at the boundary.
The set $\V_{1,1,1}$ of these triangles can be parametrized by any two of the side lengths $(a,b,c)$, since $a+b+c=2$.

There are six edges in $X_1$ meeting at $V$ (see Proposition \ref{sixedges}). Three upward edges connect $V$ with the
balanced vertices $(1,2,2)$, $(2,1,2)$ and $(2,2,1)$, and three downward edges connect it with the unbalanced vertices $(1,0,0)$, $(0,1,0)$ and $(0,0,1)$.

Triangles in $\L_{\frac32,1,\frac32}$ (see Fig.~\ref{up}b) corresponding to the upward edge from $V$ to $(2,1,2)$ have the angles $1<A=C<2,\;B=1$ and side lengths $a+c=1,\;b=1$. The set $\L_{\frac32,1,\frac32}$ can be parametrized by the angle $A$ and either $a$ or $c$.
The limits of these triangles in $\V_{1,1,1}$ have the side lengths $a+c=1,\;b=1$.
This edge is common to the tetrahedra $T_{1,0,1}$ and $T_{1,1,1}$.
Triangles corresponding to these two tetrahedra are shown in Fig.~\ref{up}a and Fig.~\ref{up}d.
The angles of triangles in $T_{1,0,1}$ satisfy the inequalities
$1<A<2$, $1<C<2$, $B<1$, $3<A+B+C<5$,
and the side lengths are $1<b<2$, $a<1$, $c<1$, $a+b+c>2$.
The limits of these triangles in $\V_{1,1,1}$  have side lengths $1\le b<2$, $a+c=2-b\le 1$.
The angles of triangles in $T_{1,1,1}$ satisfy the inequalities
$1<A<2$, $1<B<2$, $1<C<2$, $A+B+C<5$,
and the side lengths are $a<1$, $b<1$, $c<1$, $a+b+c<2$.
The limits of these triangles in $\V_{1,1,1}$  have all side lengths $\le 1$.
Both tetrahedra are pointed: $T_{1,1,1}$ has three edges in $X_1$ meeting at $V$, and $T_{1,0,1}$ has three edges in $X_1$ meeting at $(2,1,2)$ (see Fig.~\ref{000}a).

 Triangles corresponding to the edges from $V$ to $(2,2,1)$ and $(1,2,2)$ are shown in Fig.~\ref{up}c and Fig.~\ref{up}e.
 They have the angles $1<A=B<2,\;C=1$ and $1<B=C<2,\;A=1$, respectively, and side lengths satisfying $a+b=1,\;c=1$ (resp., $b+c=1,\;a=1$).
 The limits of these triangles in $\V_{1,1,1}$  have the side lengths $a+b=1,\;c=1$ (resp., $b+c=1,\;a=1$).
 These two edges are common for $T_{1,1,1}$ and the tetrahedra $T_{1,1,0}$ and $T_{0,1,1}$, respectively.
 Triangles corresponding to these two tetrahedra are shown in Fig.~\ref{up}f and Fig.~\ref{up}g.

Triangles in $\L_{\frac12,1,\frac12}$ (see Fig.~\ref{down}b) corresponding to the downward edge from $V$ to $(0,1,0)$ have the angles $0<A=C<1,\;B=1$ and side lengths $a+c=1,\;b=1$. The set $\L_{\frac12,1,\frac12}$ can be parametrized by the angle $A$ and either $a$ or $c$.
The limits of these triangles in $\V_{1,1,1}$ have the side lengths $a+c=1,\;b=1$.
This edge is common to the tetrahedra $T_{0,1,0}$ and $T_{0,0,0}$.
Triangles corresponding to these tetrahedra are shown in Fig.~\ref{down}a and Fig.~\ref{down}d.
The angles of triangles in $T_{0,1,0}$ satisfy the inequalities
$A<1$, $C<1$, $1<B<2$, $A+B+C<3$, and the side lengths are $1<b<2$, $a<1$, $c<1$, $a+b+c>2$.
The limits of these triangles in $\V_{1,1,1}$  have the side lengths $1\le b<2$, $a+c=2-b\le 1$.
All angles and side lengths of triangles in $T_{0,0,0}$ are less than 1, $A+B+C>1$, $a+b+c<2$.
The limits of these triangles in $\V_{1,1,1}$  have all side lengths $\le 1$.
The tetrahedron $T_{0,0,0}$ is pointed (with three edges in $X_1$ meeting at $V$), while the tetrahedron $T_{0,1,0}$ is not pointed (see Fig.~\ref{000}b).

 Triangles corresponding to the edges from $V$ to $(0,0,1)$ and $(1,0,0)$ are shown in Fig.~\ref{down}c and Fig.~\ref{down}e.
 They have the angles $A=B<1,\;C=1$ and $B=C<1,\;A=1$, respectively, and side lengths $a+b=1,\;c=1$ (resp., $b+c=1,\;a=1$).
 The limits of these triangles in $\V_{1,1,1}$ have the side lengths $a+b=1,\;c=1$ (resp., $b+c=1,\;a=1$).
 These two edges are common for $T_{0,0,0}$ and the tetrahedra $T_{0,0,1}$ and $T_{1,0,0}$, respectively.
 Triangles corresponding to these tetrahedra are shown in Fig.~\ref{down}f and Fig.~\ref{down}g.
\end{proof}

\begin{remark}\label{tau}{\em
The involution $\tau:(A,B,C)\mapsto(2-A,2-B,2-C)$ maps the tetrahedron $T_0=T_{0,0,0}$ to the tetrahedron $T_1=T_{1,1,1}$, and the tetrahedra $T_{1,0,0},\;T_{0,1,0}$ and $T_{0,0,1}$ to the tetrahedra $T_{0,1,1},\;T_{1,0,1}$ and $T_{1,1,0}$, respectively. For each triangle $\Delta\subset\bC$ in one of these tetrahedra, the triangle $\tau(\Delta)=\overline{\bC\setminus\Delta}$ is reflection symmetric to the complementary to $\Delta$ triangle. In particular, $\tau$ preserves the side lengths of triangles, and can be extended to the neighborhood $U$ of $\V_{1,1,1}$.}
\end{remark}


\begin{prop}\label{U}
 The neighborhood $U$ of $\V_{1,1,1}$ in Lemma \ref{111} is a three-dimensional real analytic manifold homeomorphic to an open ball in $\R^3$.
 The angles $A,B,C$ and side lengths $a,b,c$ of triangles in $U$ are real analytic functions on $U$.
 They define an embedding of $U$ to $\R^6$ as a real analytic submanifold.
\end{prop}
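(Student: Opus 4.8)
The plan is to produce a single real-analytic chart for $U$: an explicit real-analytic homeomorphism $\Phi\colon\Omega\to U$ from an open set $\Omega\subset\R^3$ that is itself manifestly homeomorphic to a ball, together with explicit expressions of the angles and side lengths as real-analytic functions of the parameter. The embedding into $\R^6$ then comes almost for free: by the definition of the topology on $\X$, the map $g\colon\Delta\mapsto(A,B,C,a,b,c)$ is already a topological embedding of $U$, so only its real-analyticity and the injectivity of its differential remain to be checked.

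To build $\Phi$ I use the developing map. For a triangle $\Delta$ in $U$ with developing map $f$, set $p_i=f(a_i)$; since $\Delta$ is short-sided, $0<a,b,c<2$, consecutive corner images are distinct, and --- using up all the freedom in $f$, which is post-composition by a rotation of $\bC$ --- I normalize so that $p_3=0$ and the side $[\C,\A]$ leaves $0$ in the positive real direction. Laying off the arc $[\C,\A]$ of length $b$, turning at $p_1$ through the interior angle $\pi A$, laying off the arc $[\A,\B]$ of length $c$, and finally laying off the arc $[\B,\C]$ of length $a$ back to $0$, one reconstructs $\Delta$ from three real parameters. The one genuine subtlety here is that the last step admits two terminations, along the two complementary arcs of the great circle through $p_2$ and $0$; the coordinates must be chosen so as to record which termination occurs and thereby make the reconstruction single-valued. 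This is exactly where the fine structure of Lemma~\ref{111} is used: the restriction that at most one side length be $\ge1$, together with the explicit ranges there for $A$, for $a+b+c$, and for the three side lengths, pins the choice down consistently and fuses the building blocks of $U$ --- the hemisphere family $\V_{1,1,1}$, the six edge families $\L_{u,v,w}$, and the eight families of triangles over the tetrahedra $T_{m,n,k}$ with $(m,n,k)\in\{0,1\}^3$ --- into one chart. I let $\Omega\subset\R^3$ be the resulting (open) parameter set and $\Phi\colon\Omega\to U$ the parametrization. Each chosen coordinate is an isometry invariant of $\Delta$, so $\Phi$ is injective; that it is onto and a homeomorphism follows by matching, via Theorem~A, the pieces of $\Omega$ against the pieces of $U$ listed in Lemma~\ref{111}. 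Over each open tetrahedron $T_{m,n,k}$ the triangle is unique (Theorem~A(i)) and is cut out of $\bC$ by equations analytic in $(A,B,C)$, so $\Phi$ is a real-analytic diffeomorphism there by the implicit function theorem; once the coordinates are chosen correctly the codimension-one strata $\L_{u,v,w}$ and the family $\V_{1,1,1}$ are also crossed real-analytically, giving $U$ the structure of a real-analytic manifold.

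It remains to identify $\Omega$ up to homeomorphism. Collecting the inequalities of Lemma~\ref{111} together with the reconstruction recipe, $\Omega$ is cut out of $\R^3$ by finitely many explicit real-analytic inequalities, and I would show that it is homeomorphic to an open ball by projecting it onto the two-dimensional parameter space of $\V_{1,1,1}$ --- the open triangle of pairs of side lengths of a hemisphere --- and checking that the fibres are open intervals depending real-analytically, and without degeneration, on the base point; the resulting cone/product structure yields $\Omega\cong B^3$. A more invariant variant of the same argument is to note that $\V_{1,1,1}$ is a properly embedded open $2$-disk in the open $3$-manifold $U$ onto which $U$ deformation retracts --- each tetrahedron and edge family retracting onto the sub-disk of hemispheres that it limits to, as recorded in the proof of Lemma~\ref{111} --- so that $U$ is a tubular neighbourhood of $\V_{1,1,1}$ and hence an open ball.

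Finally, the six invariants are by construction compositions of elementary trigonometric functions of the parameters, hence real-analytic on $U$, so $g\colon U\to\R^6$ is real-analytic; combined with the fact that it is a topological embedding, it realizes $U$ as a real-analytic $3$-submanifold of $\R^6$ once we know that $dg$ has rank $3$ everywhere. This is immediate over the open tetrahedra $T_{m,n,k}$, where $(A,B,C)$ are themselves local coordinates, and along the remaining strata one combines partial information: for instance along $\V_{1,1,1}$ the differentials $da$ and $db$ (subject to $a+b+c\equiv2$) span a $2$-plane, and $d(A+B+C)$, which changes sign across $\V_{1,1,1}$, supplies the transverse direction. I expect the main obstacle to be precisely the bookkeeping at the non-generic configurations --- choosing the three coordinates so the reconstruction is single-valued and so that all six invariants remain real-analytic across the strata $\L_{u,v,w}$ and $\V_{1,1,1}$, where consecutive or opposite corner images become antipodal and sides come to lie on common great circles --- while verifying that $\Omega$ is genuinely a ball and that $dg$ has rank $3$ are comparatively routine, if somewhat tedious, computations.
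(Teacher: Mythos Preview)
Your overall strategy---find an explicit chart, then verify real-analyticity and the rank of $dg$---is sound, but the single chart you propose does not exist. The three parameters you extract from the unfolding are essentially $(b,A,c)$: once $p_3$, $p_1$, $p_2$ and the arcs $\C\A$, $\A\B$ are fixed, the ``last step'' back to $0$ is the only remaining freedom. You assert that the inequalities of Lemma~\ref{111} pin down which of the two arcs $\B\C$ to take, but they do not. Take any triangle $\Delta\in T_{0,0,0}$, with all angles and all side lengths below~$1$; attaching one hemisphere along $\B\C$ (Definition~\ref{short}) produces a triangle in $T_{0,1,1}$ with angles $(A,B+1,C+1)$ and sides $(2-a,b,c)$, still in $U$, having \emph{the same} $(b,A,c)$ as $\Delta$. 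So your map $\Phi$ is $2$-to-$1$ on a full-dimensional piece of $U$, and no sign convention drawn from Lemma~\ref{111} repairs this: both terminations satisfy ``at most one side $\ge 1$'' and both lie in the stated ranges. The invariant $(b,A,c)$ simply forgets whether a hemisphere has been glued to $\B\C$, and a single $\R^3$-chart built on it cannot separate $T_{0,0,0}$ from $T_{0,1,1}$ (nor, by symmetry, $T_{1,1,1}$ from $T_{1,0,0}$).

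The paper avoids this by abandoning the single-chart goal and using three charts $U_A,U_B,U_C$, one for each side that is allowed to be the ``long'' one. In $U_A$ (triangles with $b,c<1$) the side $\B\C$ is placed on a fixed great circle $G=\partial D$ with $\B$ fixed, and the parameters are the length $a\in(0,2)$ together with the \emph{position of the opposite corner} $\A\in\bC\setminus\Gamma_\C$. Whether $\A$ lies in $D$ or in its complement is exactly the bit of information your coordinates lose; it separates $T_{0,0,0}\cup T_{1,0,0}$ from $T_{1,1,1}\cup T_{0,1,1}$. Each $U_\bullet$ is then visibly a ball (an interval times a disk), their common intersection is the set of triangles with all sides below~$1$, and the rank-$3$ check is carried out by an explicit wedge-product computation in $\R^3_{x,y,z}$, falling back to the mixed coordinates $(a,b,C)$ on the degenerate loci $w=0$ and $t=0$. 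Your alternative argument that $U$ deformation-retracts onto $\V_{1,1,1}$ is correct and pleasant, but it does not by itself give a chart, and your rank heuristic at $\V_{1,1,1}$ (using $d(A+B+C)$ transversally) would need to be made precise along the edge families $\L_{u,v,w}$ as well.
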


\begin{proof}
Let $D\subset\bC$ be a hemisphere bounded by a great circle $G$, with a marked point $\B\in G$.
For a point $\C\in G\setminus\B$, let $\B\C$ be the arc of $G$ of length $a<2$ from $\B$ to $\C$
oriented consistently with the orientation of $G=\partial D$.
Let $\B'$ and $\C'$ be the points of $G$ opposite to $\B$ and $\C$, and let $\C'\B'$ be the arc of $G$
such that either $\B\C\subseteq \C'\B'$ or $\C'\B'\subseteq \B\C$.
Let $\Gamma_\C=\B\C\cup \C'\B'$. Then $G\setminus\Gamma_\C$ is a non-empty open arc of $G$ of length $\min(a,2-a)$.
For a point $\A\in\bC\setminus\Gamma_\C$, let $\Delta_{\A\C}\subset\bC$ be a spherical triangle with the sides
$\A\B$ and $\A\C$ of lengths $c<1$ and $b<1$, respectively, and the side $\B\C\subset G$.
The set $U_A$ of triangles $\Delta_{\A\C}$ is an open subset of $U$,
parametrized by the length $a\in(0,2)$ of the arc $\B\C$ and the point $\A\in\bC\setminus\Gamma_\C$.

Let us show that there is one-to-one correspondence between triangles $\Delta_{\A\C}\in U_\A$ and spherical triangles with the angles satisfying (\ref{abc}) and the side lengths $a<2,\;b<1,\;c<1$.
Since the sides $\A\B$ and $\A\C$ of $\Delta_{\A\C}$ have lengths less than $1$, they do not intersect $G$ at any points other than $\B$ and $\C$, respectively. Since $\B\C$ is oriented from $\B$ to $\C$ in $\partial D$, this implies that $D$ is a proper subset of $\Delta_{\A\C}$ when $\A\notin D$, $\Delta_{\A\C}=D$ when $\A\in G\setminus\Gamma_\C$, and $\Delta_{\A\C}$ is a proper subset of $D$ when $\A\in D\setminus G$. Accordingly, each triangle $\Delta_{\A\C}$ may be of the following types:

\noindent(i) Triangle in $\V_{1,1,1}$ with the sides $b<1$ and $c<1$, when $\A\in G\setminus\Gamma_\C$;

\noindent(ii) Triangle in $\L_{1,\frac12,\frac12}$ corresponding to the edge $\big((1,1,1),(1,0,0)\big)$ when $\A\in D\setminus G$ and $a=1$ (see Fig.~\ref{down}e);

\noindent(iii) Triangle in $\L_{1,\frac32,\frac32}$ corresponding to the edge $\big((1,1,1),(1,2,2)\big)$, when $\A\notin D$ and $a=1$ (see Fig.~\ref{up}e).

\noindent(iv) Triangle in $T_{0,0,0}$, when $\A\in D\setminus G$ and $a<1$ (see Fig.~\ref{down}d);

\noindent(v) Triangle in $T_{1,0,0}$, when $\A\in D\setminus G$ and $a>1$ (see Fig.~\ref{down}g);

\noindent(vi) Triangle in $T_{1,1,1}$, when $\A\notin D$ and $a<1$ (see Fig.~\ref{up}d);

\noindent(vii) Triangle in $T_{0,1,1}$, when $\A\notin D$ and $a>1$ (see Fig.~\ref{up}g);

Conversely, each triangle in (i) - (vii) can be realized as a unique triangle $\Delta_{\A\C}\in U_A$, for a fixed point $\B\in G$, and for some points $\C\in G\setminus \B$ and $\A\in\bC\setminus\Gamma_\C$.

Finally, $U_A$ is projected to an open interval $(0,2)$ of the values of $a$, with the fiber $\bC\setminus\Gamma_\C$ homeomorphic to an open disk and continuously depending on $a$. Thus $U_A$ is homeomorphic to an open ball in $\R^3$.

By a cyclic permutation of the labels $(A,B,C)$, we define the sets $U_B$ and $U_C$ of triangles with the angles satisfying (\ref{abc}) and sides satisfying $b<2,\;a<1,\;c<1$ and $c<2,\;a<1,\;b<1$, respectively.
The same arguments as above show that each of these two sets is homeomorphic to an open ball in $\R^3$.
The intersection $U_A\cap U_B = U_A\cap U_C = U_B\cap U_C$ consists of triangles in $U$ with all three side lengths less than 1, corresponding to triangles in $\V_{1,1,1}$ with $\max(a,b,c)<1$, $T_{0,0,0}$ and $T_{1,1,1}$.
The set of such triangles, also homeomorphic to an open ball in $\R^3$, can be parametrized, as a subset of $U_A$, by the length $a\in(0,1)$ of the side $\B\C$ and the point $\A\in\bC\setminus\Gamma_\C$.
Since triangles in $U$ have at most one side length $\ge 1$, we have $U=U_A\cup U_B\cup U_C$.
Thus $U$ is homeomorphic to an open ball in $\R^3$.

To prove that $U$ is embedded in $\R^6$ as a real analytic manifold, we show this first for embedding of the chart $U_A$ of $U$,
parametrized by the length $a\in(0,2)$ of the side $\B\C$ and the point $\A\in\bC\setminus\Gamma_\C$.

We realize $\bC$ as the unit sphere in $\R^3_{x,y,z}$ and $G$ as the unit circle in the $xy$-plane, and set $\B=(1,0,0)$, $\C=(s,t,0)$ where $s^2+t^2=1$,
$\A=(u,v,w)$ where $u^2+v^2+w^2=1$. The opposite points of $\B$ and $\C$ in $G$ are $\B'=(-1,0,0)$ and $\C'=(-s,-t,0)$.
Then
\begin{equation}\label{gammac}
\Gamma_\C=(x,y,0)\in G: y>0\;\text{or}\;sy<tx.
\end{equation}
The sides $\A\B$, $\B\C$ and $\C\A$ belong to the circles in the planes through the origin of $\R^3$ with the normals
$(0,-w,v)$, $(0,0,1)$ and $(tw,-sw,sv-tu)$, respectively.
All three normals are non-zero vectors for any point in $U_A$, as $\A\notin\Gamma_\C$ implies that $v\ne 0,\;sv\ne tu$ when $w=0$,
and $(s,t)$ is a unit vector.
Thus all three planes depend analytically on parameters in $U_A$,
and the angles $A,B,C$ between any two of these planes are real analytic functions on $U_A$.
The side lengths $b<1$ and $c<1$ are also real analytic functions of parameters in $U_A$,
as $\cos(\pi c)=u,\;\cos(\pi b)=su+tv$.
The mapping from $U_A$ to $\R^3_{a,b,c}$ is nondegenerate when
$$ds\wedge du\wedge d(su+tv)\wedge d(s^2+t^2)\wedge d(u^2+v^2+w^2)=4t^2 w\,ds\wedge dt\wedge du\wedge dv\wedge dw\ne 0.$$
When $w=0$, all corners $\A,\B,\C$ are on the unit circle in the $xy$-plane, all angles $A,B,C$ are equal to $1$,
and the sides $a,b,c$ satisfy $a+b+c=2$.
For fixed $a$ and $b$ we have $\partial C/\partial w=-1/(\pi^2 b)\ne 0$ when $w=0$,
thus the mapping of $U_A$ to $\R^3_{a,b,C}$ is nondegenerate in this case.
When $t=0$, we have $\C=(-1,0,0)$, $a=1$, $b+c=1$, $A=1$, and $0<B=C<2$ is the angle between the vectors $(0,v,w)=\A-(u,0,0)$ and $(0,-1,0)$
counterclockwise in the $yz$-plane. Note that $u\ne\pm 1$ since $\A\ne\B$ and $\A\ne\C$.
Thus the mapping from $U_A$ to $\R^3_{a,b,C}$ is nondegenerate in this case.
This proves that $U_A$ is embedded in $\R^6$ as a real analytic manifold.

Embedding of the charts $U_B$ and $U_C$ of $U$ is obtained by cyclic permutations of the corners $\A,\B,\C$, angles $A,B,C$ and side lengths $a,b,c$.
Also, the common intersection of any two of the charts $U_A$, $U_B$ and $U_C$ of $U$ is mapped to itself by these cyclic permutations,
which act as linear transformations of $\R^6$.
Thus transition maps between the three charts of $U$ are real analytic.
\end{proof}

\begin{definition}\label{short} \emph{A spherical triangle $\Delta$ is called \emph{short-sided} if all its sides are shorter than the full circle (have length less than $2$). Otherwise, $\Delta$ is \emph{long-sided}. At most one side of a triangle $\Delta$ may be long.
If the side $\B\C$ of a triangle $\Delta$ is short, a hemisphere can be attached to $\B\C$,  increasing the angles $B$ and $C$ of $\Delta$ by $1$,
and replacing the side length $a$ of $\B\C$ by $2-a$. This operation can be repeated, attaching several hemispheres to $\B\C$. Similarly, hemispheres
can be attached to short sides $\A\C$ and $\B\C$ of $\Delta$. The triangle $\Delta(p,q,r)$ obtained by attaching $p$ hemispheres to the side $\B\C$
of a triangle $\Delta$ with angles $(A,B,C)$ and side lengths $(a,b,c)$, $q$ hemispheres to its side $\A\C$ and $r$ hemispheres to its side $\A\B$ has the angles $(A+q+r,B+p+r,C+p+q)$ and side lengths $\big((-1)^p(a-1)+1,(-1)^q(b-1)+1,(-1)^r(c-1)+1\big)$. The value $0$ for $p$, $q$ or $r$ means that no hemispheres are attached to the corresponding side of $\Delta$. If a side of $\Delta$ is long then hemispheres cannot be attached to that side, thus the corresponding value of $p$, $q$ or $r$ must be $0$.}
\end{definition}

\begin{prop}\label{x0} \emph{(See \cite{EGT1}, Section 10.)}
For a balanced vertex $(m,n,k)\in X_0$ there exists a unique solution $(p,q,r)\in\N^3$ of the system
\begin{equation}\label{pqr}
q+r=m-1,\quad p+r=n-1,\quad p+q=k-1.
 \end{equation}
 This identifies $X_0$ with the set $\Lambda_0$ of integer points in the first octant of $\R^3_{p,q,r}$, where the origin
 $(p,q,r)=(0,0,0)$ corresponds to the vertex $(m,n,k)=(1,1,1)$ (see Fig.~\ref{cubic}).
 A triangle $\Delta(p,q,r)$ with integer angles $(m,n,k)$ can be obtained from a hemisphere $\Delta$ with distinct boundary points $(\A,\B,\C)$ by attaching $p$ hemispheres to the side $\B\C$, $q$ hemispheres to the side $\A\C$, and $r$ hemispheres to the side $\A\B$.
 The developing map of $\Delta(p,q,r)$ is a rational function with three
critical points at $\A$, $\B$ and $\C$ of multiplicities $m-1$, $n-1$ and $k-1$.
\end{prop}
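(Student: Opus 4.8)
The plan is to establish the four assertions of Proposition~\ref{x0} in turn, the first three by elementary means and the last by reduction to a reflection argument (whose details are in \cite{EGT1}, Section~10). \textbf{The system \eqref{pqr}.} Adding the three equations of \eqref{pqr} gives $2(p+q+r)=m+n+k-3$, which is even because $(m,n,k)\in\Z^3_o$ has odd coordinate sum; hence $p+q+r\in\Z$ and
$$p=\frac{n+k-m-1}{2},\quad q=\frac{m+k-n-1}{2},\quad r=\frac{m+n-k-1}{2}$$
is the unique rational, hence (by parity) integer, solution. Since $m+n+k$ is odd, none of $m=n+k$, $n=m+k$, $k=m+n$ can hold, so the balanced inequalities \eqref{balanced} give $m\le n+k-1$ and its two cyclic analogues, i.e. $p,q,r\ge0$. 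Conversely, for $(p,q,r)\in\N^3$ the triple $(m,n,k)=(q+r+1,p+r+1,p+q+1)$ has odd coordinate sum and satisfies $n+k-m=2p+1\ge1$ and its cyclic analogues, so it lies in $\K\cap\Z^3_o=X_0$ (Remark~\ref{rmk:balanced}). Thus $(m,n,k)\mapsto(p,q,r)$ is an affine bijection $X_0\to\Lambda_0$ sending $(1,1,1)$ to the origin, which proves the first two assertions.

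\textbf{The construction of $\Delta(p,q,r)$.} Start from a hemisphere $\Delta$ of $\bC$ with three distinct marked boundary points $(\A,\B,\C)$: this is a spherical triangle with all angles equal to $1$ and side lengths $(a,b,c)\in(0,2)^3$ with $a+b+c=2$, so all of its sides are short. By Definition~\ref{short} one may attach $p$ hemispheres to $\B\C$, $q$ to $\A\C$ and $r$ to $\A\B$ (attachments to one side do not change the lengths of the other two, and the length of a side being attached to stays in $(0,2)$, so every step is admissible), and the formula of Definition~\ref{short} together with \eqref{pqr} gives the angles $(1+q+r,1+p+r,1+p+q)=(m,n,k)$. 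The hemisphere $\Delta$ carries a two-parameter family (parametrized by any two of $a,b,c$), hence so does $\Delta(p,q,r)$, and by Theorem~A~(iii) this is the entire family of triangles with angles $(m,n,k)$.

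\textbf{The developing map.} The developing map of the base hemisphere $\Delta$ is an isometric embedding onto a closed hemisphere of $\bC$ bounded by a great circle $G_0$, so all three sides of $\Delta$ develop into $G_0$. Attaching a hemisphere along a side that develops into $G_0$ extends the developing map so that the new hemisphere covers the complementary (orientation-compatible) closed hemisphere, and its free boundary --- the side replacing the old one --- again develops into $G_0$; by induction on $p+q+r$, all three sides of $\Delta(p,q,r)$ develop into the single great circle $G_0$. Therefore the developing map $f$ extends by Schwarz reflection across $G_0$ to a map $\widehat f$ on the double $\widehat\Delta=\Delta(p,q,r)\cup_\partial\overline{\Delta(p,q,r)}$, holomorphic away from $\A,\B,\C$. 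Topologically $\widehat\Delta$ is a sphere carrying cone points of angles $2\pi m,2\pi n,2\pi k$ at $\A,\B,\C$; in the conformal structure of $\widehat\Delta$ these points are regular, so $\widehat\Delta\cong\bC$ and $\widehat f$ is a rational function. Since $f$ is a local isometry in the interior and across the interiors of the sides, $\widehat f$ is unbranched except at $\A,\B,\C$, where it has local degree $m,n,k$, i.e. critical points of multiplicities $m-1,n-1,k-1$; Riemann--Hurwitz then yields $\deg\widehat f=(m+n+k-1)/2=1+p+q+r$.

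\textbf{Main obstacle.} The only nonroutine step is the last one, and within it the key input is the ``single great circle'' property: the Schwarz reflections across all three sides must be mutually consistent, which is exactly why one uses the explicit inductive hemisphere construction rather than an abstract existence statement. Granting that and the fact that the conformal structure of the double passes over the cone points, the remaining claims --- that $\widehat f$ is rational and ramified only over the corners with the stated multiplicities --- are standard, and we refer to \cite{EGT1}, Section~10, for the details.
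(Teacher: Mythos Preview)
Your proposal is correct. Note, however, that the paper does not actually supply a proof of this proposition: it is stated with the parenthetical ``(See \cite{EGT1}, Section~10.)'' and left at that. So there is no ``paper's own proof'' to compare against beyond the citation.

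Given that, your write-up is a genuine improvement: you make the linear-algebra part of the bijection $X_0\leftrightarrow\Lambda_0$ fully explicit, you invoke Definition~\ref{short} correctly for the hemisphere-attachment construction, and your argument for the developing map is sound. The inductive ``single great circle'' observation is exactly the right mechanism, and once all three sides develop into $G_0$ the Schwarz-reflection doubling and the identification of the critical points follow as you describe.

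Two minor remarks. First, the sentence ``by Theorem~A~(iii) this is the entire family of triangles with angles $(m,n,k)$'' asserts more than the proposition does and more than you have shown: Theorem~A~(iii) promises a two-parameter family but defers the side-length ranges to Section~\ref{neighbor}, so matching your two-parameter family to the full family requires an extra step. Since the proposition only claims that \emph{some} triangle with angles $(m,n,k)$ arises this way, you can simply drop that sentence. Second, your Riemann--Hurwitz computation of $\deg\widehat f$ is correct but not part of the proposition's statement; it is a nice confirmation but can be omitted if you want to stay minimal.
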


Let $S$ be the commutative semigroup of $G_1$ generated by translations
$(A,B,C)\mapsto (A,B+1,C+1)$, $(A,B,C)\mapsto (A+1,B,C+1)$,
$(A,B,C)\mapsto (A+1,B+1,C)$.
For each balanced vertex $W=(m,n,k)$, translation by $(m-1,n-1,k-1)$
in $S$ maps the vertex $V=(1,1,1)$ to $W$,
and the edges in $X_1$ and tetrahedra in $X_0$
adjacent to $V$ to the edges and tetrahedra adjacent to $W$.

\begin{notation}\label{umnk}
Let $\U_{m,n,k}$ be the neighbprhood of the set $\V_{m,n,k}$
consisting of triangles
with the angles $(A,B,C)$ such that $m-1<A<m+1$, $n-1<B<n+1$, $k-1<C<k+1$.
\end{notation}

Since all triangles in the neighborhood $U$ of the set $\V_{1,1,1}$
are short-sided,
Proposition \ref{x0} implies that this action
can be extended to the mapping from $U=\U_{1,1,1}$ to
$\U_{m,n,k}$
by attaching $p$ hemispheres to the side $\B\C$,
$q$ hemispheres to the side $\A\C$,
and $r$ hemispheres to the side $\A\B$ of each triangle
$\Delta\in U$, where $(p,q,r)$ satisfies (\ref{pqr}).

The sides of the resulting triangle are either of the same
length $a$, $b$, $c$ as the corresponding sides of $\Delta$
or of the complementary length $2-a$, $2-b$, $2-c$,
depending on the parity of the numbers $p$, $q$, $r$.
For example, the sides of triangles in $\U_{1,2,2}$ are $(2-a,\;b,\;c)$,
since $(p,q,r)=(1,0,0)$ in that case.
In particular, all triangles in each set $\U_{m,n,k}$ are short-sided.

\begin{theorem}\label{shortsided}
The set of all short-sided spherical triangles is an orientable three-dimensional manifold in $\R^6$ consisting of triangles corresponding to all balanced vertices in $X_0$, balanced and semi-balanced edges in $X_1$, and balanced and semi-balanced tetrahedra in $X_3$. It is the union $\U$ of the sets $\U_{m,n,k}$ corresponding to all balanced vertices $(m,n,k)$.
\end{theorem}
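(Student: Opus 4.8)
The plan is to realize the set of short-sided triangles as the union $\U=\bigcup\U_{m,n,k}$ over all balanced vertices $(m,n,k)\in X_0$ and to check that this union is an embedded, orientable, real-analytic $3$-manifold. First I treat the sets $\U_{m,n,k}$ as coordinate charts. For a balanced vertex $(m,n,k)$ let $(p,q,r)\in\N^3$ be the solution of (\ref{pqr}) furnished by Proposition \ref{x0}, and let $h_{p,q,r}$ be the hemisphere-attaching map of Definition \ref{short}. In the ambient coordinates $(A,B,C,a,b,c)$ of $\R^6$ this $h_{p,q,r}$ is the invertible affine map $A\mapsto A+q+r$, $B\mapsto B+p+r$, $C\mapsto C+p+q$, $a\mapsto(-1)^p(a-1)+1$, $b\mapsto(-1)^q(b-1)+1$, $c\mapsto(-1)^r(c-1)+1$, and it carries $U=\U_{1,1,1}$ onto $\U_{m,n,k}$ (surjectivity follows by successively detaching the $(p,q,r)$ hemispheres on the way from $\U_{m,n,k}$ back to $U$, each detachment being possible because the two angles adjacent to the side in question stay $>1$ until just before they reach their targets in $(0,2)$). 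Hence, by Proposition \ref{U}, every $\U_{m,n,k}$ is a real-analytic open $3$-ball embedded in $\R^6$; the transition maps $h_{p,q,r}^{-1}\circ h_{p',q',r'}$ are restrictions of affine automorphisms of $\R^6$, hence real-analytic; and since ``short-sided'' and ``the angle triple lies in the open box centred at $(m,n,k)$'' are open conditions, each $\U_{m,n,k}$ is open in $\U$. Therefore $\U$ is a real-analytic $3$-manifold, locally closed in $\R^6$, on which the angles and side lengths are real-analytic.

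It then remains to identify $\U$ with the set of all short-sided triangles. The inclusion $\U\subseteq\{\text{short-sided}\}$ was recorded before the theorem. For the converse, given a short-sided $\Delta$ with angles $(A,B,C)$, detach hemispheres from its sides: a detachment at a side of length $<2$ replaces that length $x$ by $2-x\in(0,2)$ and lowers the two adjacent angles by $1$, so it keeps the triangle short-sided, and carried out at the corner bearing the largest angle it strictly decreases $\max(A,B,C)$. Provided the process can be continued until all three angles lie in $(0,2)$, we reach $\Delta_0\in U$ with $\Delta=h_{p,q,r}(\Delta_0)$, where $(p,q,r)\in\N^3$ counts the hemispheres removed from the three sides; then $(m,n,k)=(q+r+1,\;p+r+1,\;p+q+1)$ is a balanced vertex by Proposition \ref{x0} and $\Delta\in\U_{m,n,k}\subseteq\U$. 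I expect the crux of the whole theorem to be exactly that the process does not stall, i.e.\ that a short-sided triangle whose largest angle is $\geq 2$ always admits a detachment at a side issuing from that corner (equivalently: a triangle with a large angle at a corner and both other angles $<1$ is long-sided; equivalently, the ``primitive'' short-sided triangles are precisely those with angles in $(0,2)^3$). I would prove this by analysing the developing map near the corner carrying the angle $\geq 2$, where the cone angle is at least a full turn and the two issuing sides are short, so that they bound a removable hemisphere, or else invoke the corresponding description of triangles with integer or large angles from \cite{EGT1}.

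Once this is in place the cellular description follows combinatorially: the angle triple of a short-sided triangle lies in the box of some balanced vertex, hence is one of the odd-sum integer corners of the unit cube that contains it, and these corners are exactly the vertices, the two endpoints, or the single vertex of the tetrahedron in $X_3$, edge in $X_1$, or vertex in $X_0$ through whose interior, open edge, or point the triple passes; so that cell has a balanced vertex and is therefore a balanced vertex, a balanced or semi-balanced edge, or a balanced or semi-balanced tetrahedron, while conversely every such cell lies in $\U_{m,n,k}$ for one of its balanced vertices (this is Lemma \ref{111} for $(1,1,1)$, and the general case follows by translating by the semigroup $S$). In particular triangles over unbalanced edges and unbalanced tetrahedra are long-sided. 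Finally, orientability is obtained by fixing an orientation on $U$ (a ball) and transporting it to each $\U_{m,n,k}$ by $h_{p,q,r}$; on an overlap $\U_{m,n,k}\cap\U_{m',n',k'}$ the transition is an affine map compatible with the complex orientation of the Riemann sphere underlying the developing maps, and a direct computation with this affine map — keeping track of the reflections $x\mapsto 2-x$ in the side lengths — shows that the two transported orientations agree. Hence $\U$ is an orientable real-analytic $3$-manifold in $\R^6$, equal both to the set of all short-sided triangles and to $\bigcup\U_{m,n,k}$.
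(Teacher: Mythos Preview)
Your chart description via the affine maps $h_{p,q,r}$ and the identification of $\U_{m,n,k}$ with the image of $U$ is essentially the paper's approach, and the real-analytic manifold structure follows as you say.

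There is, however, a genuine error in your orientability argument. You assert that the orientations transported from $U$ by the maps $h_{p,q,r}$ agree on overlaps, but this is false. The paper checks the overlap $\U_{1,1,1}\cap\U_{1,2,2}$ (which lies over $T_{0,1,1}\cup T_{1,1,1}\cup\L_{1,\frac32,\frac32}$) and finds that the two induced orientations are \emph{opposite}. The point is that the natural orientation of $U$ is compatible with the orientation of $\R^3_{A,B,C}$ on six of the eight tetrahedra adjacent to $(1,1,1)$ but opposite to it on $T_{0,0,0}$ and $T_{1,1,1}$; a single generator of $S$ sends $T_{1,0,0}$ (compatible) to $T_{1,1,1}$ (opposite), so the transported orientation disagrees with the intrinsic one. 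The fix is to reverse the orientation on $\U_{m,n,k}$ whenever $p+q+r$ is odd (equivalently $m+n+k\equiv 1\pmod 4$); then the signs cancel on every overlap and $\U$ becomes consistently oriented. Your hand-wave about ``the complex orientation of the Riemann sphere underlying the developing maps'' does not resolve this, and the reflections $x\mapsto 2-x$ you mention are precisely what produce the sign discrepancy rather than cancel it.

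On the identification of $\U$ with the set of short-sided triangles: your detachment strategy is a different route from the paper, which instead shows directly in Proposition~\ref{sausage} that every triangle in $\X\setminus\U$ (i.e.\ over an unbalanced edge or tetrahedron) is long-sided, by organising all such triangles into explicit one-parameter families $\delta_{\A,A}$ emanating from semi-balanced edges, with the side $a$ growing past $2$ once one leaves $\nabla_0$. Your equivalent reformulation (``a triangle with one angle $\geq 2$ and the other two $<1$ is long-sided'') is exactly what that construction proves. Your proposed developing-map argument would also work, but you have left it unexecuted; the paper's explicit construction has the advantage of simultaneously yielding the homeomorphism $\X\to\U$ used later in Theorem~\ref{homotopy}.
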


\begin{proof}
It will be shown in the next section that all triangles in $\X\setminus\U$
are long-sided (have one side of length $\ge 2$). Since $\U$ is a three-dimensional manifold covered by charts
$\U_{m,n,k}$, it is enough to show that $\U$ is orientable.

The set $U=U_{1,1,1}$ is the union $U=U_A\cup U_B\cup U_C$ of three open subsets sets, each of them naturally oriented as a subset of $(0,2)\times\bC$ (see proof of Proposition \ref{U}). It is easy to check (selecting the point $\A$ either at the center of $D$ or at the center of $\bC\setminus D$ in the proof of Proposition \ref{U}) that these orientations are compatible on the intersections $U_A\cap U_B = U_A\cap U_C=U_B\cap U_C$ (opposite to the orientation of $\R^3_{A,B,C}$ on the tetrahedra $T_{0,0,0}$ and $T_{1,1,1}$). Thus the set $U$ is oriented. Note that this orientation of $U$ is compatible with the
orientation of $\R^3_{A,B,C}$ on all tetrahedra adjacent to $(1,1,1)$ except $T_{0,0,0}$ and $T_{1,1,1}$.

A generator $(A,B,C)\mapsto (A,B+1,C+1)$ of $S$ maps the set $U_{1,1,1}$ to the set $U_{1,2,2}$ which intersects
with $U_{1,1,1}$ over $T_{0,1,1}\cup T_{1,1,1}\cup\L_{1,\frac32,\frac32}$. This mapping defines orientation of $U_{1,2,2}$ such that orientations of $U_{1,1,1}\cap U_{1,2,2}$ induced from $U_{1,1,1}$ and $U_{1,2,2}$ are opposite.
Orientation of the set $\U$ can be defined by reversing orientations of all sets $U_{m,n,k}$ induced from the orientation of $U_{1,1,1}$ when $m+n+k\equiv 1\mod 4$, corresponding to the odd values of $p+q+r$.
\end{proof}

\section{Sequences of unbalanced tetrahedra in $X_3$ and edges in $X_1$}\label{semibalanced}

\begin{prop}\label{unbalanced}
Let $L_0$ be a semi-balanced edge in $X_1$.
Then there is a unique not pointed tetrahedron $\nabla_0$ in $X_3$ with the edge $L_0$, and a unique infinite sequence
\begin{equation}\label{sequence}
\nabla_0,L_1,\nabla_1,L_2,\nabla_2,\ldots
\end{equation}
 of tetrahedra $\nabla_j$ in $X_3$ and edges $L_j$ in $X_1$, where $\nabla_{j-1}$ and $\nabla_j$ have a common edge $L_j$, for each $j>0$.
 The tetrahedra $\nabla_j$ and edges $L_j$ are unbalanced for $j>0$.
Each unbalanced tetrahedron in $X_3$, and each unbalanced edge in $X_1$, belongs to exactly one such sequence.
\end{prop}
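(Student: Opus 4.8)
The plan is to analyze the local structure at an unbalanced vertex, as described in Proposition \ref{fouredges} and the surrounding discussion, and then to iterate. First I would recall the combinatorial picture near an unbalanced vertex $V=(m,n,k)$ with, say, $m+n\le k-1$: exactly four edges meeting at $V$ lie in $X_1$, namely the four edges on which the coordinate $C=k$ is constant, and these are two edges of the first type and two of the second type (one upward, one downward). Each such edge is common to exactly two of the eight tetrahedra adjacent to $V$. The key observation I would establish is that, restricting attention to unbalanced edges and tetrahedra, every unbalanced tetrahedron in $X_3$ has exactly \emph{two} opposite edges in $X_1$ (this is stated in Section \ref{edgesandvertices}: unbalanced tetrahedra are not pointed), and every unbalanced edge in $X_1$ is shared by exactly \emph{two} tetrahedra in $X_3$ at least one of which is unbalanced (Remark \ref{edges-unbalanced}), and each of those two tetrahedra contributes one more edge of $X_1$ opposite to the given one. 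This ``degree two'' property on both sides is exactly what forces the incidence graph, restricted to unbalanced tetrahedra and unbalanced edges together with their semi-balanced boundary, to be a disjoint union of paths and cycles; the content of the proposition is that there are no cycles and that each component is a one-ended infinite path starting at a semi-balanced edge.

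Concretely, I would argue as follows. Start with a semi-balanced edge $L_0$. By Proposition \ref{sixedges} (if the balanced end of $L_0$ is treated) or by inspecting Figures \ref{semibalanced-type1}--\ref{semibalanced-type2} at the marginally unbalanced end, $L_0$ lies in exactly two tetrahedra of $X_3$, one pointed and one not pointed; let $\nabla_0$ be the not pointed one — this establishes existence and uniqueness of $\nabla_0$. Now $\nabla_0$ has a second edge in $X_1$ opposite to $L_0$; call it $L_1$. Since $\nabla_0$ is unbalanced (its vertices other than the balanced end of $L_0$ are unbalanced — here I use that the two non-pointed semi-balanced tetrahedra at a marginally unbalanced vertex have their $V$-adjacent edges unbalanced, per Proposition \ref{fouredges}, so $L_1$ and hence $\nabla_1$ are unbalanced), $L_1$ is an unbalanced edge, lying in $\nabla_0$ and one more tetrahedron $\nabla_1$, which is also unbalanced; take its opposite $X_1$-edge $L_2$, and repeat. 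This produces the sequence (\ref{sequence}), and at each step the choice is forced, giving uniqueness of the continuation. The sequence cannot terminate because each unbalanced edge lies in two tetrahedra and each unbalanced tetrahedron has two $X_1$-edges, so there is always a next term; it cannot close into a cycle or return to a semi-balanced edge because the coordinate that is ``frozen'' — the large coordinate $\max(m,n,k)$ at the relevant unbalanced vertex — is monotone along the sequence (each step from $\nabla_{j-1}$ to $\nabla_j$ across $L_j$ moves into an adjacent unit cube in a consistent direction, increasing the distance $d_1((A,B,C),\Z_e^3)$ from the even sublattice, cf. (\ref{cond3}) and Remark \ref{rmk:A}), so the sequence is strictly ``escaping to infinity'' and visits no tetrahedron twice.

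For the last sentence — that each unbalanced tetrahedron and each unbalanced edge belongs to exactly one such sequence — I would run the construction \emph{backwards}. Given an unbalanced edge $L$ or unbalanced tetrahedron $\nabla$, follow the chain in the direction of \emph{decreasing} $\max$-coordinate (equivalently decreasing $d_1$-distance to $\Z_e^3$). By the same degree-two bookkeeping this backward chain is also uniquely determined at each step, and by monotonicity it must terminate after finitely many steps; it can only terminate when the ``opposite edge'' of the current tetrahedron fails to be unbalanced, i.e. when we reach a not pointed tetrahedron one of whose $X_1$-edges is semi-balanced — which is precisely an $\nabla_0$. This exhibits $L$ (or $\nabla$) inside a unique sequence of the form (\ref{sequence}), and disjointness of distinct sequences follows because membership determines the backward chain uniquely.

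The main obstacle I anticipate is the bookkeeping at the marginally unbalanced vertices (Figures \ref{semibalanced-type1}--\ref{semibalanced-type2}): one must check carefully that, among the four semi-balanced tetrahedra adjacent to such a vertex, it is exactly the non-pointed ones whose relevant edges feed into (or out of) the unbalanced chains, and that the pointed semi-balanced tetrahedron $T_{m,n,k-1}$ is \emph{not} part of any such sequence (it belongs to the balanced-vertex neighborhoods instead). Verifying monotonicity of the $\max$-coordinate along a step — i.e. that crossing $L_j$ always moves away from the even sublattice rather than oscillating — is the quantitative heart of the no-cycle argument and deserves an explicit case check using (\ref{cond3}); everything else is combinatorial enumeration already set up in Propositions \ref{sixedges} and \ref{fouredges}.
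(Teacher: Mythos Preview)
Your approach is essentially the paper's, recast in more structural language. The paper simply writes down the explicit formula $\nabla_j=T_{m+j,n,k}$ (for a marginally unbalanced end $(m,n,k)$ of $L_0$ with $m-1=n+k$) and reads off everything from it: the edges are $L_j=\big((m+j,n,k+1),(m+j,n+1,k)\big)$ for odd $j$ and $L_j=\big((m+j,n+1,k+1),(m+j,n,k)\big)$ for even $j$, and an arbitrary unbalanced $T_{m',n,k}$ with $m'>n+k+1$ is identified as $\nabla_{m'-n-k-1}$ in the sequence started at $L_0=\big((n+k+1,n+1,k+1),(n+k+1,n,k)\big)$. Your degree-two incidence argument together with monotonicity of the dominant coordinate recovers exactly this picture without naming the formula; the explicit version has the advantage that no separate ``no cycles'' or ``backward termination'' argument is needed.

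Two small corrections. First, $\nabla_0$ is semi-balanced, not unbalanced (it has the balanced end of $L_0$ as a vertex); your parenthetical already says the right thing, so only the wording needs fixing. Second, the $d_1$-distance to $\Z^3_e$ is \emph{not} monotone along the sequence --- it stays in $(1,2)$ on every open tetrahedron by (\ref{cond3}) --- so drop that aside. The correct monotone quantity is the cube index in the dominant direction (equivalently, the constant value of the largest angle on each $L_j$), which increases by $1$ at every step because the two opposite $X_1$-edges of a not-pointed tetrahedron lie on the two opposite faces of its cube perpendicular to that direction; this is what rules out cycles and forces backward termination.
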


\begin{proof}
First we construct the sequence (\ref{sequence}).
Let $W=(m,n,k)$ be a marginally unbalanced vertex such that $m-1=n+k$,
and let $L_0=\big((m,n+1,k+1),(m,n,k)\big)$ be a semi-balanced edge
in $X_1$ of the second type, with one end at $W$.
Then $L_0$ is a common edge of a pointed semi-balanced tetrahedron
$T_{m-1,n,k}$ (with two more edges in $X_1$,
from $(m,n+1,k+1)$ to $(m-1,n,k+1)$ and $(m-1,n+1,k)$)
and a non-pointed semi-balanced tetrahedron $\nabla_0=T_{m,n,k}$
with one unbalanced edge $L_1=\big((m+1,n,k+1),(m+1,n+1,k)\big)$ in $X_1$,
of the first type.
The edge $L_1$ is common for the tetrahedron $\nabla_0$
and an unbalanced
tetrahedron $\nabla_1=T_{m+1,n,k}$ which
has an unbalanced edge $L_2=\big((m+2,n+1,k+1),(m+2,n,k)\big)$
in $X_1$, of the second type.
Extending this construction, we obtain the sequence (\ref{sequence})
consisting of unbalanced edges $L_j=\big((m+j,n,k+1),(m+j,n+1,k)\big)$
of the first type for odd $j$, unbalanced edges
$L_j=\big((m+j,n+1,k+1),(m+j,n,k)\big)$ of the second type for even $j$,
and the tetrahedra $\nabla_j=T_{m+j,n,k}$, unbalanced for $j>0$.

The cases $n-1=m+k$ and $k-1=m+n$ are similar.

Now we show that each unbalanced tetrahedron in $X_3$ and each unbalanced
edge in $X_1$ belong to exactly one sequence (\ref{sequence}).
According to Remark \ref{edges-unbalanced} each unbalanced edge in $X_1$
is an edge of an unbalanced tetrahedron,
and each unbalanced tetrahedron $\nabla=T_{m,n,k}$ in $X_3$ with $m>n+k+1$
has two opposite unbalanced edges in $X_1$,
either $L=\big((m,n,k+1),(m,n+1,k)\big)$ and
$L'=\big((m+1,n+1,k+1),(m+1,n,k)\big)$
or $L=\big((m+1,n,k+1),(m+1,n+1,k)\big)$
and $L'=\big((m,n+1,k+1),(m,n,k)\big)$,
depending on the parity of $m+n+k$.
Then $\nabla$, $L$ and $L'$ belong to a sequence
(\ref{sequence}) associated with a semi-balanced edge
$L_0=\big((n+k+1,n+1,k+1),(n+k+1,n,k)\big)$.
The cases $n>m+k+1$ and $k>m+n+1$ are similar.
\end{proof}

\begin{prop}\label{sausage}
The set $\S\subset\X$ of triangles corresponding to the tetrahedra $\nabla_j$ for $j\ge 0$ and edges $L_j$ for $j>0$ in a sequence (\ref{sequence}) is homeomorphic to an open ball in $\R^3$.
All triangles in $\S$ except those in $\nabla_0$ are long-sided.
\end{prop}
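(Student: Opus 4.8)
The plan is to parametrize $\S$ by following the ``sausage'' structure of the sequence (\ref{sequence}) and gluing together the charts associated to each tetrahedron $\nabla_j$. I would set up coordinates along the sequence as follows. By Proposition \ref{unbalanced}, after a translation in $G_2$ we may assume $\nabla_j=T_{m+j,n,k}$ with $m-1=n+k$, so the ``large'' angle is $A$, which ranges over the interval $(m,\infty)=\bigcup_{j\ge 0}(m+j,m+j+1)$ as we move through the tetrahedra, with the integer value $A=m+j+1$ attained on the edge $L_{j+1}$ and $A=m$ attained on the boundary semi-balanced edge $L_0$. The two small angles $B,C$ always satisfy $n<B<n+1$, $k<C<k+1$ on the interiors, with the two edges of each $\nabla_j$ lying in $X_1$ corresponding to $B+C$ or $|B-C|$ being integer (Theorem A(ii)). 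So the natural first coordinate is the angle $A\in(m,\infty)$, and the remaining two coordinates should describe the triangle with one integer-or-near-integer ``long'' angle.

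The key construction is a developing-map picture analogous to the proof of Proposition \ref{U}: a triangle corresponding to $\nabla_j$ or $L_j$ has a developing map that wraps around a great circle $G$ roughly $A$ times near the corner $\A$. Concretely, I would take a base great circle $G=\partial D$ with a marked point $\B\in G$, an arc $\B\C\subset G$ of some length $a'\in(0,2)$ (the fractional part of $a$, which by Theorem A(ii) is the genuine free parameter since the side opposite the near-integer corner has non-integer length), and then the corner $\A$ sitting at distance $<1$ from both $\B$ and $\C$ off the circle, exactly as in Proposition \ref{U}. The integer part of the winding — i.e. how many hemispheres are stacked along the side $\B\C$ — is governed by $\lfloor A\rfloor - m$, which jumps by one each time we cross an edge $L_j$. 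So $\S$ is fibered over the half-line $A\in(m,\infty)$: over each open unit interval the fiber is the open disk $\bC\setminus\Gamma_\C$ (triangle in $\nabla_j$), and over each integer value $A=m+j+1$ the fiber is again an open disk parametrizing the triangles of $L_{j+1}$, and these glue continuously because attaching one more hemisphere and passing $a'\mapsto 2-a'$, $(B,C)\mapsto(B+1,C+1)$ is a homeomorphism on fibers (Definition \ref{short}). This exhibits $\S$ as the total space of a disk-bundle over an open half-line, hence homeomorphic to an open ball in $\R^3$.

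For the second assertion, note that each $\nabla_j$ with $j>0$ is an unbalanced tetrahedron with $m+j>n+k+1$, so by the inequality $m+j>n+k+1$ and Theorem A(ii) (and Definition \ref{short}), the side opposite the corner with the large angle $A$ has length equal to $A$-ish $\ge 2$ — more precisely the side length opposite the integer corner on $L_j$ is an integer $\ge 2$, and on the interior of $\nabla_j$ one side has length $>2$. Indeed for the triangles in $\nabla_j$ the developing map stacks $j\ge 1$ whole hemispheres along one side, so that side has length $\ge 2$; this is precisely the statement that triangles in $\nabla_j$, $j>0$, and in $L_j$, $j>0$, are long-sided, while the triangles in $\nabla_0$ (where no extra hemisphere is stacked, $A<m+1=n+k+2$) are short-sided. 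So the only short-sided triangles in $\S$ are those in $\nabla_0$, as claimed.

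The main obstacle I anticipate is the gluing across the integer values $A=m+j+1$: one must check that the disk parametrization of the triangles in $L_{j+1}$ is genuinely the common boundary-at-infinity of the parametrizations of $\nabla_j$ and $\nabla_{j+1}$ in a way that is compatible with the topology (a continuous, fiberwise-homeomorphic family over the closed interval $[m+j,m+j+1]$), and that the parity flip $a'\mapsto 2-a'$ accompanying each attached hemisphere does not create a discontinuity or a reversal that would spoil the product structure. This is the same phenomenon handled in the proof of Lemma \ref{111} and Proposition \ref{U} via the single chart $U_A$ running over the whole interval $a\in(0,2)$; here one must instead run a single chart over $A\in(m,\infty)$, carefully matching the developing-map pictures of consecutive tetrahedra of the first and second type alternately. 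Once this continuity is in place, the half-line is contractible and the bundle is trivial, so $\S\cong(0,\infty)\times\mathbb{R}^2$, an open ball.
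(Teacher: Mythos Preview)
Your overall shape is right --- $\S$ is a trivial disk bundle over a half-line --- but the mechanism you describe for moving through the sausage is confused, and this confusion is exactly what forces you into the gluing problem you flag at the end. You propose to cross from $\nabla_j$ to $\nabla_{j+1}$ by ``stacking hemispheres along the side $\B\C$''; but attaching a hemisphere to $\B\C$ increases the angles $B$ and $C$ by $1$ each and leaves $A$ unchanged (Definition \ref{short}), whereas along the sequence (\ref{sequence}) it is $A$ that increases while $B,C$ stay in fixed unit intervals. So the hemisphere-attachment picture does not move you along the sausage at all, and the parity flip $a'\mapsto 2-a'$ you worry about is an artifact of this wrong mechanism.

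The paper avoids gluing entirely by a single chart. Fix $\B\in G=\partial D$ and a point $\A$ in the open hemisphere $D\setminus G$; then let $\C$ \emph{wind around} $G$, so that the arc $\B\C$ (counted with multiplicity) has length $a$ growing from $1$ to $\infty$ while the angle $A$ at $\A$ grows continuously from $1$ to $\infty$. For each fixed $\A\in D\setminus G$, the family $A\mapsto\delta_{\A,A}$ sweeps through $L_0,\nabla_0,L_1,\nabla_1,\ldots$ without any discrete transition: $A\in(j+1,j+2)$ gives $\nabla_j$, $A=j+1$ gives $L_j$, and the side $a$ equals $A$ throughout. This identifies $\S$ with $(D\setminus G)\times(1,\infty)$ in one stroke, and the long-sided claim is immediate since $a\ge 2$ exactly when $A\ge 2$, i.e.\ outside $\nabla_0$. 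The general balanced vertex $(m,n,k)$ is then obtained by attaching $n-1$ and $k-1$ hemispheres to the \emph{short} sides $\A\B$ and $\A\C$ (not to $\B\C$), which is a fixed homeomorphism applied uniformly to the whole chart. So your fiber should be the fixed open disk $D\setminus G$, not the $a$-dependent $\bC\setminus\Gamma_\C$ from Proposition \ref{U}, and once you see this the bundle is manifestly a product with no gluing to check.
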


\begin{proof}
 We start with the case $W=(1,1,1)$ and $L_0=\big((1,1,1),(1,0,0)\big)$. Triangles corresponding to the edges $L_j$ and tetrahedra $\nabla_j$ of the sequence (\ref{sequence}) can be constructed as follows. Let $D\subset\bC$ be a hemisphere bounded by a great circle $G$. For a fixed point $\B\in G$, let $\C\in G$ be an opposite point, and let $\B\C$ be an arc of $G$ oriented from $\B$ to $\C$ consistently with the orientation of $G=\partial D$.
Let us choose an arbitrary point $\A\in D\setminus G$, and let $G_1$ be the great circle
passing through $\A$ and $\B$ (and also through $\C$, since $\C$ is opposite to $\B$).
Connecting $\A$ with $\B$ and $\C$ by the arcs of $G_1$ inside $D$, we get a triangle $\delta_{\A,1}\subset D$ with the vertices $(\A,\B,\C)$, angles $A=1$, $B=C<1$ and sides $a=1$, $b+c=1$ (as in Fig.~\ref{down}e) corresponding to the edge $L_0$. Since the point $\A$ is uniquely determined by the angle $B<1$ between $G$ and $G_1$ and the length $c<1$ of the arc $\A\B$, each triangle corresponding to the edge $L_0$ is equal to exactly one triangle $\delta_{\A,1}$.

 Next we fix a point $\A\in D\setminus G$, and allow the point $\C$ to move along the circle $G$, increasing the angle $A$ between the arc $\A\B$ and the arc $\A\C\subset D$ of a great circle $G_2$ passing through $\A$ and $\C$, so that $1<A<2$. Then we get triangles $\delta_{\A,A}$ with the vertices $(\A,\B,\C)$, angles $1<A<2$, $B<1$, $C<1$, and sides $1<a<2$, $b<1$, $c<1$ (as in Fig.~\ref{down}g). Each of these triangles belongs to the tetrahedron $\nabla_0=T_{1,0,0}$. Conversely, for a fixed point $\B\in G$, the point $\A\in D\setminus G$ is uniquely determined by the angles $(A,B,C)\in T_{1,0,0}$. Thus each triangle in $T_{1,0,0}$ is equal to exactly one triangle $\delta_{\A,A}$ with $1<A<2$, for some $\A\in D\setminus G$.

If we continue moving the point $\C$ along $G$ increasing the angle $A$, we get $\C=\B$ when $A=2$, and a triangle $\delta_{\A,2}$ with the angles $A=2$, $B+C=1$ and sides $a=2$, $b=c<1$ corresponding to an unbalanced edge $L_1=\big((2,0,1),(2,1,0)\big)$ of the first type. It is easy to check that each triangle corresponding to $L_1$ is equal to exactly one triangle $\delta_{\A,2}$ for some $\A\in D\setminus G$. In particular, all triangles corresponding to $L_1$ are long-sided.

Moving the point $\C$ further along $G$ and increasing the angle $A$ accordingly, we obtain a family of triangles $\delta_{\A,A}$ with vertices $(\A,\B,\C)$, angles $A\ge 1$, $B<1$, $C<1$ and sides $a\ge 1$, $b<1$, $c<1$ corresponding to all edges $L_j$ and tetrahedra $\nabla_j$ of the sequence (\ref{sequence}) associated with the semi-balanced edge $L_0$. For $j>0$ these triangles are long-sided, with $a=j+1$ for triangles corresponding to $L_j$ and $j+1<a<j+2$ for triangles in $\nabla_j$.

 For a balanced vertex $(m,n,k)$ with $m+1=n+k$ and a semi-balanced edge $L_0=\big((m,n,k),(m,n-1,k-1)\big)$, we fix a point $\B\in G=\partial D$ and define a triangle $\delta_{\A,A,n,k}$, for some point $\A\in D\setminus G$ and angle $A\ge 1$, as a triangle $\delta_{\A,A}$ with $n-1$ hemispheres attached to its side $\A\B$ and $k-1$ hemispheres attached to its side $\A\C$.
Then $\delta_{\A,A,n,k}$ has the angles $(A+m-1,B,C)$ where $A+m-1\ge m$, $n-1<B<n$, $k-1<C<k$,
and sides $a\ge 1$, $b<1$ if $k$ is odd, $1<b<2$ if $k$ is even, $c<1$ if $n$ is odd, $1<c<2$ if $n$ is even.
It is easy to check that these triangles $\delta_{\A,A,n,k}$ are in one-to-one correspondence with triangles corresponding to the edges $L_j$ and tetrahedra $\nabla_j$ of the sequence (\ref{sequence}) in Proposition \ref{unbalanced} associated with the semi-balanced edge $L_0=\big((m,n,k),(m,n-1,k-1)\big)$.
For $j>0$ all these triangles are long-sided.

This construction identifies the set $\S$ with the product $(D\setminus G)\times(1,\infty)$ homeomorphic to an open ball in $\R^3$.

The cases of balanced vertices $(m,n,k)$ with $n+1=m+k$ and $k+1=m+n$, and semi-balanced edges
$L_0=\big((m,n,k),(m-1,n,k-1)\big)$ and $L_0=\big((m,n,k),(m-1,n-1,k)\big)$, are similar.
\end{proof}

\section{Homotopy type of the set $\X$ of spherical triangles}\label{type}

According to Proposition \ref{x0}, the set $X_0$ of balanced vertices $(m,n,k)$ can be identified with the set $\Lambda_0$ of the integer points $(p,q,r)\in\N^3$ in the first octant of $\R^3_{p,q,r}$ so that $(m,n,k)=(q+r+1,p+r+1,p+q+1)$.
It follows from Proposition \ref{sixedges} that an edge in $X_1$ with one end at a balanced vertex $(m,n,k)$ has its other end either at an unbalanced vertex or at a balanced vertex identified with $(p',q',r')=(p\pm 1,q\pm 1,r\pm 1)\in\N^3$.
Thus the union of $X_0$ and the set $B_1$ of balanced edges in $X_1$ can be identified with the 1-skeleton $\Lambda_1$ of the cubic partition of the first octant of $\R^3_{p,q,r}$ (see Fig.~\ref{cubic}).

\begin{theorem}\label{homotopy}
The set $\X$ of all spherical triangles is homotopy equivalent to $\Lambda_1$.
\end{theorem}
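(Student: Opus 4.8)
The plan is to build an explicit deformation retraction of $\X$ onto the subspace corresponding to $X_0\cup B_1$, which by the discussion preceding the theorem is identified with $\Lambda_1$. The space $\X$ decomposes into two pieces that have already been analyzed: the open set $\U$ of short-sided triangles, covered by the balls $\U_{m,n,k}$ (Theorem \ref{shortsided}), and the complement $\X\setminus\U$, which by Proposition \ref{sausage} is a disjoint union of open balls $\S$, one for each sequence (\ref{sequence}), i.e. one for each semi-balanced edge $L_0$ in $X_1$. Each such $\S$ is attached to $\U$ precisely along the triangles in the not-pointed tetrahedron $\nabla_0$ together with the unbalanced part of the semi-balanced edge $L_0$; indeed by Remark \ref{rmk:balanced} the triangles of $\S$ that are short-sided are exactly those in $\nabla_0$ and in the half of $L_0$ between its midpoint and its balanced end. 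So the first step is to retract each ball $\S$ onto its intersection with $\U$: since $\S\cong(D\setminus G)\times(1,\infty)$ with the parameter $a\in(1,\infty)$ (resp.\ the analogous parameter in the general case) measuring how far out along the sequence a triangle sits, and the part lying in $\U$ is the slab $1<a<2$, one simply pushes $a$ down to, say, $a=3/2$; this is a strong deformation retraction of $\S$ onto $\S\cap\U$, and it is compatible across the different sequences because distinct sequences meet only inside $\U$ (a short-sided region). Performing all these retractions simultaneously gives a deformation retraction of $\X$ onto $\U$.

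The second step is to deformation retract $\U$ onto the balanced locus. The key geometric input is that $\U$ is the union of the balls $\U_{m,n,k}$, one per balanced vertex, and that the combinatorics of how these balls overlap mirrors the combinatorics of the cubes $Q_{p,q,r}$ in $\R^3_{p,q,r}$: translating by $S$ identifies $\U_{1,1,1}=U$ with each $\U_{m,n,k}$, and two charts $\U_{m,n,k}$, $\U_{m',n',k'}$ meet nontrivially exactly when the corresponding lattice points $(p,q,r)$, $(p',q',r')$ differ by a unit vector, the overlap being (the preimage of) the two tetrahedra and one edge in $X_1$ sitting between the two vertices — compare the computation in the proof of Theorem \ref{shortsided} that $U_{1,1,1}\cap U_{1,2,2}$ is $T_{0,1,1}\cup T_{1,1,1}\cup\L_{1,\frac32,\frac32}$. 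One then writes $\U$ as the total space of a (regular) neighborhood of a $1$-complex: the nerve of the covering $\{\U_{m,n,k}\}$ is exactly the graph $\Lambda_1$ (vertices $=$ balanced vertices, edges $=$ balanced edges), each chart is contractible, each pairwise intersection is contractible and nonempty, and triple intersections are empty (three distinct balanced vertices are never pairwise unit-distance in a way that all three charts share a triangle — this has to be checked, but it follows from the local pictures in Propositions \ref{sixedges} and \ref{fouredges}). By the nerve lemma $\U\simeq\Lambda_1$; to upgrade this to an explicit retraction one defines, on each chart $U\cong(0,2)\times\bC$ in the coordinates of Proposition \ref{U}, a retraction onto the union of the balanced edges through $(1,1,1)$ that is the identity near those edges, and checks the retractions glue on overlaps after the orientation-reversing bookkeeping of Theorem \ref{shortsided} (which is irrelevant to homotopy type but confirms the gluing data is consistent).

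Composing the two retractions, $\X\simeq\U\simeq\Lambda_1$, which is the $1$-skeleton of the cubic partition of the first octant of $\R^3$, completing the proof.

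The main obstacle is the second step: verifying cleanly that the covering $\{\U_{m,n,k}\}$ of $\U$ is a "good" covering in the sense needed for the nerve lemma — concretely, that every nonempty pairwise intersection $\U_{m,n,k}\cap\U_{m',n',k'}$ is contractible (not just connected) and that all triple intersections are empty. The contractibility is plausible because each such intersection is the preimage of a union of two tetrahedra glued along a semi-balanced edge in $X_1$, and one expects it to be an open ball, but making this rigorous requires essentially redoing the chart analysis of Proposition \ref{U} for the overlap region (one can reduce to the single case $\U_{1,1,1}\cap\U_{1,2,2}$ by the $S$-action, then apply $\tau$ from Remark \ref{tau} and the explicit parametrization). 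The emptiness of triple intersections, and the identification of the nerve with $\Lambda_1$ rather than something larger, is combinatorial and follows from the edge lists in Section \ref{edgesandvertices}, but it must be stated carefully because a priori a balanced vertex has six balanced neighbors (three upward, and the downward ones need not be balanced), so one must confirm the downward edges contribute nothing new to the overlap pattern already recorded in $\Lambda_1$.
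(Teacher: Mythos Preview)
Your first step, collapsing the long-sided tails $\S$ back into $\U$, is essentially what the paper does (the paper phrases it as a homeomorphism $[1,\infty)\to[1,2)$ rather than a retraction, which is cleaner since $\U$ is open in $\X$ and so is not literally a retract; but this is a minor technicality).

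The second step, however, has a real gap. Your nerve-lemma argument rests on two claims that are both false:

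\begin{itemize}
\item \emph{Triple intersections are not empty.} Every triangle in the pointed tetrahedron $T_{1,1,1}$ has all three angles in $(1,2)$, hence lies simultaneously in $\U_{1,1,1}$, $\U_{1,2,2}$, $\U_{2,1,2}$, and $\U_{2,2,1}$. So the nerve contains a $3$-simplex on the four $(p,q,r)$-vertices $(0,0,0),(1,0,0),(0,1,0),(0,0,1)$, and in general every balanced tetrahedron contributes a $3$-simplex. The nerve is therefore much larger than the graph $\Lambda_1$.
\item \emph{Some pairwise intersections are disconnected.} Consider $\U_{1,2,2}\cap\U_{2,1,2}$: the common angle region is $(1,2)\times(1,2)\times(1,3)$, and the part of $X$ inside it is $T_{1,1,1}\cup T_{1,1,2}$. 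The only edge these tetrahedra share is $\big((1,2,2),(2,1,2)\big)$, which is a first-type edge with balanced ends and hence is \emph{not} in $X_1$. So the intersection has two components, one over each tetrahedron, and is not contractible.
\end{itemize}

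Either of these kills the direct application of the nerve lemma. One could try to salvage the approach by refining the cover or by computing the actual nerve and showing it collapses to $\Lambda_1$, but that is substantial extra work you have not done.

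The paper sidesteps this entirely: instead of the nerve, it retracts $\U$ tetrahedron-by-tetrahedron onto the subspace $\X_B$ of triangles lying over balanced vertices and balanced edges (the retractions are made compatible on shared edges), and then observes that the projection $\X_B\to X_0\cup B_1\cong\Lambda_1$ has contractible fibers, invoking the Vietoris--Begle theorem to conclude.
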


\begin{proof}
According to the proof of Proposition \ref{sausage}, the set $\S$ of triangles in the sequence (\ref{sequence}) associated with a semi-balanced edge $L_0=\big((m,n,k),(m,n-1,k-1)\big)$ can be parametrized by $(\A,A)\in(D\setminus\partial D)\times(1,\infty)$ where $\A$ is a point in the interior of a hemisphere $D$ and $A$ is the angle at $\A$. The union of $\S$ and the set $\L_0$ of triangles corresponding to $L_0$
can be parametrized by $(\A,A)\in(D\setminus\partial D)\times[1,\infty)$, and the union of $\L_0$ and the set of triangles in the semi-balanced tetrahedron $\nabla_0$ can be parameterized by $(\A,A)\in(D\setminus\partial D)\times[1,2)$.
A homeomorphism $[1,\infty)\to[1,2)$ defines a homeomorphism $\L_0\cup\S\to\L_0\cup\nabla_0$.
Applying this homeomorphism to the sequences (\ref{sequence}) corresponding to all semi-balanced edges in $X_1$, we obtain a homeomorphism
$\X\to\U$ where $\U$ is the set of all short-sided triangles (see Definition \ref{short} and Theorem \ref{shortsided}) corresponding to all balanced vertices in $X_0$, balanced and semi-balanced edges in $X_1$, and balanced and semi-balanced tetrahedra in $X_3$. This homeomorphism is identity on the triangles corresponding to all vertices in $X_0$ and all balanced and semi-balanced edges in $X_1$.

Next, the union of the set of triangles in each tetrahedron $\nabla\subset\U$, the sets of triangles corresponding to the edges of $\nabla$ in $X_1$ adjacent to its balanced vertices, and the sets of triangles corresponding to the balanced vertices of $\nabla$, is retractable to the union of the sets of triangles corresponding to the balanced vertices and balanced edges of $\nabla$. Moreover, this retraction can be made compatible on the common edges of the tetrahedra in $\U$, thus the set $\U$ is retractable to the union $\X_B$ of the sets of triangles corresponding to all balanced edges and vertices.
Since projection of the set $\X_B$ to $X_0\cup B_1$ is a compact-covering map with contractible fibers, Vietoris-Begle mapping theorem (see \cite{Spanier}) implies that $\X_B$ is homotopy equivalent to $X_0\cup B_1$, which can be identified with $\Lambda_1$.

Combining the homeomorphism $\X\to\U$, retraction of $\U$ to $\X_B$ and projection of $\X_B$ to $X_0\cup B_1$, we complete the proof of Theorem \ref{homotopy}.
\end{proof}

\begin{prop}\label{balanced-mnk}
If a semi-balanced tetrahedron $T$ is pointed then its subset of balanced triangles
is the intersection of $T$ with the convex hull of its balanced vertices and midpoints of all its edges
(midpoints of edges of $T$ with both ends at balanced vertices may be excluded, as they are not vertices of the convex hull).
If $T$ is not pointed then its subset of balanced triangles is the intersection of $T$
with the convex hull of its single balanced vertex $V$ and midpoints of all its edges adjacent to $V$.
\end{prop}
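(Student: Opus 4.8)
The plan is to work in the coordinates $(A,B,C)$ on each semi-balanced tetrahedron $T=T_{m,n,k}\subset X_3$ and to describe, by explicit inequalities, which points of $T$ satisfy the triangle inequality (\ref{balanced}). Recall from Definition \ref{balancededges} that the balanced region is the closed cone $\K=\{A\le B+C,\ B\le A+C,\ C\le A+B\}$, the intersection of three half-spaces; so $T\cap\K$ is automatically convex, being the intersection of the simplex $T$ with three half-spaces. The content of the statement is therefore to identify the vertices of this convex polytope. I would first set up, for each of the two combinatorial types of semi-balanced tetrahedron appearing in Section \ref{edgesandvertices} (pointed, with a balanced vertex $V\in X_0$ and three edges in $X_1$ through $V$; and not pointed, with a single balanced vertex $V$ and two opposite edges in $X_1$), the explicit list of the four vertices in $\Z^3_o$, using the enumeration in Propositions \ref{sixedges} and \ref{fouredges} and in the proof of Proposition \ref{unbalanced}. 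From Remark \ref{rmk:balanced} the balanced/unbalanced status of each edge is already known: triangles on a balanced edge are all balanced, on an unbalanced edge all unbalanced, and on a semi-balanced edge exactly the half from the balanced vertex up to and including the midpoint is balanced.

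Next I would argue as follows. The boundary of $\K$ meets $\overline T$ only along the three facet-planes $A=B+C$, $B=A+C$, $C=A+B$. A vertex of the polytope $\overline T\cap\K$ is either a vertex of $\overline T$ that lies in $\K$ (these are exactly the balanced vertices of $T$, by definition), or a point where one of the facet planes of $\K$ cuts an edge or a $2$-face of $\overline T$. For an edge $e$ of $\overline T$ with endpoints $P,P'\in\Z^3_o$, a plane like $A=B+C$ either contains $e$, misses $e$, or meets it in a single interior point; and when it meets $e$ in an interior point, that point is the midpoint of $e$. This is the one computational lemma I actually need: because $P,P'\in\Z^3_o$ differ by one of the lattice vectors occurring as edge directions of the $T_{m,n,k}$ (namely $(\pm1,\pm1,0)$ and permutations, and $(0,\pm1,\pm1)$ — the ``first type'' and ``second type'' directions), the linear function $A-B-C$ (or its cyclic variants) changes by an even integer along $e$ and takes half-integer values at $\Z^3_o$-points in a controlled way, forcing any interior zero to sit exactly at the midpoint. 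I would verify this by a short case check over the finitely many edge-direction vectors. Granting it, every vertex of $\overline T\cap\K$ is a balanced vertex of $T$ or a midpoint of an edge of $T$, so $\overline T\cap\K$ is contained in the convex hull $H$ of these points; conversely each balanced vertex and each relevant midpoint lies in $\overline T\cap\K$ (the midpoint of a semi-balanced edge is balanced by Remark \ref{rmk:balanced}, the midpoint of a balanced edge is balanced, and one checks the midpoint of an unbalanced edge of a pointed $T$ still lands in $\K$ — this is where the ``may be excluded'' parenthetical is handled: such a midpoint is a convex combination of the others and is not an extreme point), and convexity gives $H\subseteq\overline T\cap\K$. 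Intersecting with the open tetrahedron $T$ and matching the two combinatorial types to the two clauses of the statement finishes the argument; in the not-pointed case the only balanced vertex is $V$, and among the six edges of $T$ the midpoints that are extreme points are precisely those of the edges adjacent to $V$, which yields the second clause.

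The main obstacle I anticipate is the bookkeeping in the not-pointed case: one must check that among the midpoints of the six edges of a not-pointed semi-balanced $T$, exactly those on the three edges through $V$ are extreme points of $\overline T\cap\K$, while the midpoint of the opposite edge in $X_1$ and the midpoints of the other two edges either fail to lie in $\K$ or are non-extreme convex combinations. Concretely this means computing, for the explicit vertex list of the not-pointed tetrahedron in the proof of Proposition \ref{unbalanced} (e.g. $\nabla_0=T_{m,n,k}$ with $m-1=n+k$), the values of $A-B-C$, $B-A-C$, $C-A-B$ at all four vertices and at the relevant midpoints, and reading off the face structure; this is routine but needs care because of the three cyclic variants and the parity of $m+n+k$. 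A secondary point to be careful about is that the statement is about $T\cap\K$ where $T$ is \emph{open}: the claimed set is $T\cap H$, so I should phrase the conclusion as ``the subset of balanced triangles of $T$ equals $T\cap H$'' and note that passing between $\overline T\cap\K$ and $T\cap\K=T\cap H$ is harmless since $T$ is the interior of $\overline T$ and $H\subseteq\overline T$.
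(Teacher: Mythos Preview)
Your approach is correct and is genuinely different from the paper's. The paper proceeds by brute enumeration: for each combinatorial type of semi-balanced tetrahedron (pointed with one, two, or three unbalanced non-apex vertices, and the not-pointed type) it writes down the four vertices explicitly, computes the six midpoints, checks by hand which ones lie on the facets $A=B+C$, $B=A+C$, $C=A+B$ of $\K$, and reads off the convex hull. Your route replaces this with a single uniform lemma: along any edge of $\overline T$ the linear form $A-B-C$ (and its cyclic variants) takes \emph{odd} integer values at the endpoints (not half-integer, since $m-n-k\equiv m+n+k\pmod 2$) and changes by $0$ or $\pm 2$, so an interior zero forces the endpoint values to be $-1$ and $+1$ and pins the zero to the midpoint. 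This explains conceptually why only midpoints and balanced vertices can be extreme points of $\overline T\cap\K$, and reduces the paper's several paragraphs of coordinates to one parity check.

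Two small repairs. First, your vertex classification must also dispose of the case where \emph{two} facets of $\K$ are active, i.e.\ a vertex of $\overline T\cap\K$ lying on an edge of $\K$; since those edges sit in the coordinate planes $\{A=0\},\{B=0\},\{C=0\}$ and a semi-balanced $\overline T$ meets any such plane in at most one of its own edges, this case collapses to your edge-midpoint case, but you should say so. Second, you have the parenthetical backwards: the clause ``may be excluded'' concerns midpoints of edges with \emph{both} ends balanced (they are averages of two generators already present, hence never extreme), whereas the check you correctly flag---that midpoints of edges with both ends \emph{un}balanced still lie in $\K$ for a pointed $T$---is what makes the reverse inclusion $H\subseteq\overline T\cap\K$ go through, and is a separate verification.
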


\begin{proof}
The set of balanced triangles in $T$ is $T_B=T\cap\K$, where $\K$ is the closed cone in the first octant of $\R^3$ defined by the inequalities (\ref{balanced}).
If $T$ is pointed then it has three edges in $X_1$ with a common end at a balanced vertex $V=(m,n,k)$
and other ends at $P=(m,n-1,k-1)$, $Q=(m-1,n,k-1)$ and $R=(m-1,n-1,k)$.
We may assume that $m+1=n+k$.
Then the vertex $P$ is unbalanced, thus the edge $VP$ is semi-balanced. Its midpoint is $vp=(m,n-\frac12,k-\frac12)$.

Consider first the case when both vertices $Q$ and $R$ are balanced. Then $T$ has two balanced edges $VQ$ and $VR$, an edge $QR$ not in $X_1$ with both ends at
balanced vertices, and two edges $PQ$ and $PR$ not in $X_1$, with
midpoints $pq=(m-\frac12,n-\frac12,k-1)$ and $pr=(m-\frac12,n-1,k-\frac12)$.
All three midpoints $vp,pq,pr$ belong to the face $A=B+C$ of $\K$, thus $T_B$ is the intersection of $T$ with the the convex hull of $V,Q,R,vp,pq,pr$.

Next, consider the case when two vertices of $T$, say $P$ and $Q$, are unbalanced, and $R$ is balanced. This happens when $m=n>k=1$.
Then $T$ has two semi-balanced edges $VP$ and $VQ$, with midpoints $vp=(m,n-\frac12,\frac12)$ and $vq=(m-\frac12,n,\frac12)$,
and three edges $PQ$, $PR$ and $QR$ not in $X_1$, with midpoints $pq=(m-\frac12,n-\frac12,0)$, $pr=(m-\frac12,n-1,\frac12)$ and $qr=(m-1,n-\frac12,\frac12)$.
Since $m=n$, the points $vp$, $pq$ and $pr$ belong to the facet $A=B+C$ of $\K$, and the points $vq$, $pq$ and $qr$ belong to the facet $B=A+C$ of $\K$.
Thus $T_B$ is the intersection of $T$ with the convex hull of $V,R,vp,vq,pq,pr,qr$.

The remaining case  $T=T_{0,0,0},\;(m,n,k)=(1,1,1)$ is left as an exercise. The set of balanced triangles in that case is the intersection of $T$ with
the convex hull of $(1,1,1)$, $(1,\frac12,\frac12)$, $(\frac12,1,\frac12)$, $(\frac12,\frac12,1)$, $(0,\frac12,\frac12)$, $(\frac12,0,\frac12)$, $(\frac12,\frac12,0)$.

If $T$ is not pointed then there is a single balanced vertex $V=(m,n,k)$ of $T$.
We may assume that $m+1=n+k$ and that a single semi-balanced edge of $T$ connects $V$ with an unbalanced vertex $P=(m,n-1,k-1)$.
Then $T$ has two more unbalanced vertices $Q=(m+1,n,k-1)$ and $R=(m+1,n-1,k)$.
The midpoints of the edges $VP$, $VQ$ and $VR$ are $vp=(m,n-\frac12,k-\frac12)$, $vq=(m+\frac12,n,k-\frac12)$ and $vr=(m+\frac12,n-\frac12,k)$.
All of them belong to the facet $A=B+C$ of $\K$. Thus $T_B$ is the intersection of $T$ with the convex hull of $V,vp,vq,vr$.
\end{proof}

\begin{cor}\label{homotopy-balanced}
The set of balanced triangles is homotopy equivalent to $\Lambda_1$.
\end{cor}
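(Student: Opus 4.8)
The plan is to mirror the proof of Theorem \ref{homotopy}, replacing short-sided triangles by balanced ones at each stage. The set $\mathcal B$ of balanced triangles decomposes as the union of the sets of triangles corresponding to balanced vertices in $X_0$, balanced edges in $X_1$, balanced tetrahedra in $X_3$, and the balanced portions of semi-balanced edges and semi-balanced tetrahedra. By Remark \ref{rmk:balanced}, a semi-balanced edge $L$ contributes its half from the balanced vertex to the midpoint (midpoint included), and Proposition \ref{balanced-mnk} describes exactly the balanced part $T_B = T\cap\K$ of each semi-balanced tetrahedron $T$.

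First I would show that $\mathcal B$ retracts onto $\X_B$, the union of the sets of triangles corresponding to all balanced edges and balanced vertices. For the balanced tetrahedra this is already contained in the proof of Theorem \ref{homotopy}: each balanced (hence pointed) tetrahedron $\nabla$, together with its three $X_1$-edges at the balanced vertex $V$, retracts onto the union of triangles over the balanced vertices and balanced edges of $\nabla$. For a semi-balanced tetrahedron $T$, Proposition \ref{balanced-mnk} exhibits $T_B$ as the intersection of $T$ with a convex polytope spanned by the balanced vertices of $T$ together with edge-midpoints; this polytope deformation-retracts onto the union of the balanced edges of $T$ and, when $T$ is not pointed, onto its single balanced vertex $V$ (which is one point). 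The key point, exactly as in Theorem \ref{homotopy}, is that these retractions can be chosen to agree on the common faces of adjacent tetrahedra, so they glue to a retraction of all of $\mathcal B$ onto $\X_B$. Here one uses that a balanced edge is shared only by pointed tetrahedra and that the midpoints of semi-balanced edges are not vertices of any balanced edge, so no conflicts arise along lower-dimensional strata.

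Next, exactly as in the proof of Theorem \ref{homotopy}, the projection $\X_B \to X_0\cup B_1$ sending each triangle to its triple of angles is a compact-covering map whose fibers are the sets of triangles with fixed balanced angles: over a balanced vertex the fiber is $\mathcal V_{m,n,k}$, parametrized by two side lengths and hence contractible, and over an interior point of a balanced edge the fiber is a segment of side-length values, again contractible. The Vietoris--Begle mapping theorem (\cite{Spanier}) then gives that $\X_B$ is homotopy equivalent to $X_0\cup B_1$, which by Proposition \ref{x0} and the discussion preceding Theorem \ref{homotopy} is identified with the $1$-skeleton $\Lambda_1$ of the cubic partition of the first octant. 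Composing the retraction $\mathcal B\to\X_B$ with this homotopy equivalence completes the proof.

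The main obstacle is verifying that the tetrahedron-by-tetrahedron retractions onto $\X_B$ are mutually compatible across the semi-balanced tetrahedra — that is, checking on the common faces that the polytopes from Proposition \ref{balanced-mnk} fit together into a subcomplex retracting onto $X_0\cup B_1$ at the level of triangles. This is essentially a bookkeeping matter about which midpoints and vertices are shared, already implicit in the statement of Proposition \ref{balanced-mnk} and in Propositions \ref{sixedges}--\ref{fouredges}, but it is where the combinatorial care lies; everything else is a transcription of the argument for Theorem \ref{homotopy}.
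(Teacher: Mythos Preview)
Your proposal is correct and follows essentially the same route as the paper. The only difference is in framing: the paper observes that the retraction $\U\to\X_B$ constructed in Theorem \ref{homotopy} can, thanks to Proposition \ref{balanced-mnk}, be factored as a retraction $\U\to\mathcal B$ followed by a retraction $\mathcal B\to\X_B$, so the second factor is obtained for free; you instead build the retraction $\mathcal B\to\X_B$ directly from the same ingredients, which amounts to the same argument written out rather than invoked.
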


\begin{proof}
Description of the sets of balanced triangles in semi-balanced tetrahedra in Proposition \ref{balanced-mnk}
allows one to define retraction of the set $\U$ of short-sided triangles to the set $\X_B$ of triangles corresponding to balanced edges and vertices
in the proof of Theorem \ref{homotopy} as a composition of retraction of $\U$ to the set of balanced triangles and retraction of the set of balanced triangles to $\X_B$.
\end{proof}
\vspace{.1in}

{\em Department of Mathematics,

Purdue University, West Lafayette, IN 47907 USA

eremenko@math.purdue.edu, agabriel@math.purdue.edu}
\end{document}